\title{Spinor class fields for sheaves of lattices}
\author{Luis Arenas-Carmona\thanks{Supported by Fondecyt,
proyecto No. 1085017.}
\\Universidad de Chile,
\\ Facultad de Ciencias,
\\Casilla 653, Santiago,Chile
\\E-mail: learenas@uchile.cl}
\begin{document}

\theoremstyle{plain}
\newtheorem{thm}{Theorem}
\newtheorem{prop}{Proposition}
\newtheorem{lemma}[prop]{Lemma}
\newtheorem{coro}{Corollary}[prop]
\newtheorem{corth}{Corollary}[prop]
\theoremstyle{definition}
\newtheorem{ex}{Example}
\newtheorem{dfn}[prop]{Definition}
\newtheorem{qu}{Open question}
\newtheorem{rmk}{Remark}
\newtheorem{nota}{Remark}

\newcommand\finitum{\mathbb{F}}
\newcommand\linitum{\mathbb{L}}
\newcommand\dif[1]{\mathrm{d}{#1}}
\newcommand\Top{\mathbb{T}}
\newcommand\Na{\mathcal{N}}
\newcommand\Ha{\mathfrak{H}}

\newcommand\hilbertfield{\mathbf{H}}
\newcommand\RK{R_K}
\newcommand\RSigma{R_\hilbertfield}
\newcommand\ord{\mathrm{ord}}

\newcommand\idealb{\mathcal{B}}
\newcommand\pipsi{\Pi_v\Psi_v}
\newcommand\alge{\mathfrak{A}}
\newcommand\ad{\mathbb A}
\newcommand\rank{\textnormal{rank }}
\newcommand\Aut{\textnormal{Aut }}
\newcommand\Hom{\textnormal{Hom}}
\newcommand\modstar{\textnormal{mod}^*\ }
\newcommand\tr{\textnormal{ tr}}
\newcommand\la{\Lambda}
\newcommand\bark{{\bar{k}}}
\newcommand\pe{\mathfrak{P}}
\newcommand\uno{\{1\}}
\newcommand\quadc{k^*/(k^*)^2}
\newcommand\oink{\mathcal O}
\newcommand\oinkkR{\oink_{k,R}}
\newcommand\oinkKR{\oink_{K,\RenK}}
\newcommand\oinkFR{\oink_{F,R_F}}
\newcommand\A{\mathfrak A}
\newcommand\Q{\mathbb Q}
\newcommand\imap{\mathcal I}
\newcommand\reali{\mathbb R}
\newcommand\compleji{\mathbb C}
\newcommand\enteri{\mathbb Z}
\newcommand\barkv{\bar{k}_v}
\newcommand\gal{\mathcal G}
\newcommand\hal{\mathcal H}
\newcommand\galbark{{\mathcal G_{\bark/k}}}
\newcommand\galbarkvkv{{\mathcal G_{\barkv/k_v}}}
\newcommand\galbarkk{\galbark}
\newcommand\galkk{{\mathcal G_{K/k}}}
\newcommand\galkvkv{{\mathcal G_{K_w/k_v}}}
\newcommand\galv{{\mathcal G_w}}

\newcommand\cero{\{0\}}
\newcommand\coind[3]{\textnormal{Coind}_{#1}^{#2}(#3)}
\newcommand\ind[3]{\textnormal{Ind}_{#1}^{#2}(#3)}
\newcommand\vaa{\longrightarrow}
\newcommand\baja{\downarrow}
\newcommand\ort{\perp}
\newcommand\grupi{\mathbb G}
\newcommand\Da{\mathfrak{D}}
\newcommand\Ba{\mathfrak{B}}
\newcommand\DaKwp{\Da_{K_{\wpK}}}
\newcommand\DaK{\Da_K}
\newcommand\DaKun{\DaK^1}

\newcommand\vekv{V_{k_v}}
\newcommand\idealc{\mathcal{C}}
\newcommand\ideala{\mathcal A}
\newcommand\ideale{\mathcal E}
\newcommand\idealf{\mathcal F}
\newcommand\idealp{\wp}
\newcommand\idealP{\mathcal P}
\newcommand\quat{\mathcal D}
\newcommand\D{\quat}
\newcommand\orden{\mathfrak{D}}
\newcommand\rscal{\mathcal{R}_{L/k}}

\newcommand\bigperpp{\mathop{\textnormal{\Huge $\perp$}}}
\newcommand\bimatrix[4]{\left(\begin{array}{cc}#1&#2\\#3&#4\end{array}
\right)}
\newcommand\tmatrix[9]{\left(\begin{array}{ccc}#1&#2&#3\\#4&#5&#6\\
#7&#8&#9\end{array}\right)}

\newcommand\spinlomegag{Spin_{2n}(g)}

\newcommand\spinomega{Spin_{n}(Q)}
\newcommand\hspinomega{Spin_{n}(\quat,h)}

\newcommand\tensor{\bigotimes_{\oink_k}}
\newcommand\classa{[a_\sigma]_\sigma}
\newcommand\classu{[u_\sigma]_{\sigma\in\gal}}
\newcommand\defaction{f^{-1}(a^{-1}_\sigma(f(w))^\sigma)}
\newcommand\matrici{\mathbb M}
\newcommand\Zmod[1]{\enteri/{#1}\enteri}

\maketitle

\begin{abstract}
  We extend the theory of spinor class field and representation fields
to any linear algebraic group over a global function field
satisfying some technical conditions that ensure the existence of
a suitable spinor norm. This is the analog of a result given by
the author in the number field case. Spinor class fields can still
be defined for lattices defined over a projective curve in a
sheaf-theoretical context. Spinor genus is a rather weak invariant
in this context, by it can be used to study the behavior of the
genus at affine subsets. Examples are provided.
\end{abstract}

\section{Introduction}\label{intro} Let $n$ be a positive integer
and let $k$ be a number field with ring of integers $\oink_k$. Every conjugacy class of maximal
orders in the matrix algebra $\matrici_n(k)$ has a representative
of the form $\Da_n(I)=(I^{\delta_{1,i}-\delta_{1,j}})_{i,j}$,
where $(I_{i,j})_{i,j}$ is the lattice of matrices
$(a_{i,j})_{i,j}$ satisfying $a_{i,j}\in I_{i,j}$ for ideals $I_{i,j}$ (\cite{Chev}, p.
18), i.e.,
\[
\Da_2(I)=\bimatrix{\oink_k}I{I^{-1}}{\oink_k},
\Da_3(I)=\tmatrix{\oink_k}II{I^{-1}}{\oink_k}{\oink_k}{I^{-1}}{\oink_k}{\oink_k},\dots
\]
Two such orders $\Da_n(I)$ and $\Da_n(I')$ are conjugate if and
only if $J^nI'I^{-1}$ is principal for some fractional ideal $J$
of $k$ (\cite{Chev}, p. 23). It follows that the set of conjugacy
classes of maximal orders in $\matrici_n(k)$ is in correspondence
with the quotient $\mathfrak{g}/\mathfrak{g}^n$ of the class group
$\mathfrak{g}$ of $k$. In \cite{abelianos} we extended this theory
to the case where $k$ is a global function field and $\oink_k$ is
an arbitrary Dedekind domain whose field of quotients is $k$. We
can even consider the projective case if we replace the ring
$\oink_k$ by the structure sheaf $\oink_X$ of a smooth projective
curve $X$ whose field of rational functions is $k$, and we
interpret maximal orders and ideals in the preceding statements in
a sheaf-theoretical setting. In that context, it can only be
proved that every spinor genera of maximal orders can have a
representative of the form $\Da_n(I)$. In fact, the existence of
non-split maximal orders in $\frac1n$-th of all spinor genera is a
consequence of the Theory of representation by spinor genera
(\S\ref{s4}). In this article we extend this theory to any group
having an apropiate spinor norm, as it was done for the number
field case in \cite{spinor}.

 Let $X$ be a smooth projective curve over a finite field $\finitum$.
 Let $K(X)$ be the field of rational functions on $X$, and
let $G$ be an algebraic subgroup of $\mathrm{Aut}_{K(X)}{V}$, for
some $K(X)$-vector space $V$. Let $C$ be an arbitrary non-empty
Zariski-open subset of $X$. We allow the case $C=X$. Let $\Lambda$
and $M$ be $C$-lattices on $V$. We say that $\Lambda$ and $M$ are:
\begin{enumerate} \item in the same $G$-class if $g(\Lambda)=M$
for some $g\in G$, \item in the same $G$-genus if for every point
$\wp\in C$ there exists an element $g_\wp\in G_\wp$
such that $g_\wp(\Lambda_\wp)=M_\wp$, where
$\Lambda_\wp$ and $M_\wp$ denote the completions  at
$\wp$, \item in the same $G$-spinor genus if there exists a
lattice $P$ satisfying the following conditions:
\begin{enumerate}
\item There exists an element $h\in G$ such that $h(\Lambda)=P$,
\item  For every point $\wp\in C$ there exists an element
$g_\wp\in G_\wp$ with trivial spinor norm (\S\ref{s1b}) such that
$g_\wp (P_\wp)=M_\wp$.
\end{enumerate}
\end{enumerate}
Spinor genera and classes coincide whenever $G$ is non-compact at
some infinite point of $C$ if any (Proposition \ref{p31b}). In
\S\ref{s2} we prove the following result:
\begin{thm}\label{main}
Let $G$ be any semi-simple linear algebraic group $G$, defined
over the field of functions $K=K(X)$ of a smooth projective curve
$X$. Assume $G$ satisfies the technical conditions \textbf{SN} and
\textbf{RU} in \S\ref{s1b} below. Then the set of $G$-spinor
genera in the $G$-genus of any $C$-lattice $\Lambda$ is a
principal homogeneous space over the Galois group $\gal$ of an
Abelian extension $\Sigma^C_\Lambda/K(X)$ called the spinor class
field of the lattice $\Lambda$. This extension splits completely
at every infinite point of $C$ if any. If $D\subseteq C$ is open
and $M$ is the restriction of $\lambda$ to $D$ as a sheaf (or
equivalenlty the $\oink_X(D)$-lattice generated by $\Lambda$),
then $\Sigma^C_\Lambda/K(X)$ is the maximal subextension of
$\Sigma^C_\Lambda/K(X)$ splitting at every place in $C\backslash
D$.
\end{thm}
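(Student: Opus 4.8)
The plan is to recast genus, class and spinor genus in adelic terms over $K=K(X)$ and then feed the resulting quotient into class field theory. First I would write $G_\ad$ for the restricted product $\prod'_\wp G_\wp$ over the closed points $\wp$ of $X$ and identify the $G$-genus of $\Lambda$ with the orbit $G_\ad\cdot\Lambda$, so that lattices in the genus correspond to $G_\ad/\mathcal S$, where $\mathcal S=\prod_\wp\mathcal S_\wp$ is the adelic stabilizer: $\mathcal S_\wp$ is the stabilizer of $\Lambda_\wp$ in $G_\wp$ for $\wp\in C$, while $\mathcal S_\wp=G_\wp$ at each infinite point $\wp\in X\setminus C$, since a $C$-lattice imposes no condition there. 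Then the $G$-classes inside the genus are the double cosets $G_K\backslash G_\ad/\mathcal S$ with $G_K=G(K)$.

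By condition \textbf{SN} the spinor norm is a homomorphism $\theta\colon G_\ad\vaa\Ha$ into a restricted product of the local abelian groups $\Ha_\wp=H^1(k_\wp,\mu)$, with $\mu$ the fundamental group of $G$, whose local kernels are exactly the images of the simply connected cover $\tilde G_\wp$; the adelic kernel $N=\ker\theta$ is then the group of everywhere-trivial-spinor-norm elements appearing in the definition of $G$-spinor genus. A short computation with the normal subgroup $N$ shows that $M=x\Lambda$ lies in the spinor genus of $\Lambda$ exactly when $x\in G_KN\mathcal S$, so the set of spinor genera is $(G_KN)\backslash G_\ad/\mathcal S$, and applying $\theta$ (which kills $N$) identifies this with the abelian group
\[
\mathcal A=\theta(G_\ad)\,\big/\,\bigl(\theta(G_K)\cdot\theta(\mathcal S)\bigr).
\]
Here strong approximation for $\tilde G$, guaranteed by condition \textbf{RU} (which ensures $\tilde G$ is isotropic at a suitable place so that approximation applies), is what pins down $N$ and shows that the spinor norm is the only obstruction to lying in a single spinor genus, so that the trivial-spinor-norm elements act transitively inside each one; the basepoint $\Lambda$ then makes the resulting set a principal homogeneous space under $\mathcal A$.

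Next I would turn $\mathcal A$ into a Galois group. Composing $\theta$ with the local reciprocity maps realizes $\Ha$ as a restricted product of quotients of the $k_\wp^*$, so that $\theta(G_\ad)$ is an open subgroup of a generalized idele group of $K$, $\theta(G_K)$ maps into the principal ideles $K^*$, and $\theta(\mathcal S)$ supplies the local integral conditions; hence $\mathcal A$ is a finite quotient of a ray-class-type group of $K$. Artin reciprocity for the function field $K(X)$ then yields a finite Abelian extension $\Sigma^C_\Lambda/K$ with $\gal(\Sigma^C_\Lambda/K)\cong\mathcal A$, and the torsor structure of the previous step becomes the asserted principal homogeneous space over $\gal$. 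Because $\mathcal S_\wp=G_\wp$ at every infinite point $\wp\in X\setminus C$, the whole local factor $\theta(G_\wp)$ is absorbed into the subgroup we quotient by, so the local component of the reciprocity map is trivial there and $\Sigma^C_\Lambda$ splits completely at each infinite point.

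Finally, for the restriction statement I would compare the defining subgroups for $C$ and for an open $D\subseteq C$. The completions of $M$ at points of $D$ agree with those of $\Lambda$, so $\mathcal S_\wp$ is unchanged for $\wp\in D$; at the points of $C\setminus D$, however, $\mathcal S_\wp$ enlarges from the stabilizer of $\Lambda_\wp$ to all of $G_\wp$, these being the new infinite points of the $D$-lattice $M$. Thus the subgroup defining $\Sigma^D_M$ contains the one defining $\Sigma^C_\Lambda$ and differs from it precisely by adjoining $\theta(G_\wp)$ for $\wp\in C\setminus D$; by the reciprocity dictionary this says that $\Sigma^D_M$ is the maximal subextension of $\Sigma^C_\Lambda$ splitting completely at every place of $C\setminus D$. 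I expect the principal obstacle to be the strong-approximation argument of the second paragraph: in the function-field setting one must verify, using \textbf{SN} and \textbf{RU}, that $\theta$ is surjective onto the restricted product with the correct local images and that its kernel is exactly the everywhere-trivial-spinor-norm group, and one must handle with care the infinite points, where no lattice condition is present and the full local group contributes.
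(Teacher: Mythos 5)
Your proposal is correct and follows essentially the same route as the paper: identify the spinor genera with the double cosets $(G_K\ker\Theta)\backslash G_\ad/G_\ad^\Lambda$, push this through the spinor norm (using the surjectivity and integrality statements that conditions \textbf{SN} and \textbf{RU} supply) to obtain the idele class group quotient $J_X/K(X)^*H(\Lambda)$, apply Artin reciprocity to get $\Sigma^C_\Lambda$, and read off both the splitting at infinite points and the comparison with $\Sigma^D_M$ from the fact that $\theta_\wp(G_\wp)=K_\wp^*$ at the places of $X\setminus C$ and of $C\setminus D$. One inessential slip worth noting: strong approximation for $\tilde G$ plays no role in this theorem (the spinor genus is \emph{defined} as the $G_K\ker\Theta$-orbit, so no transitivity needs proving, and condition \textbf{RU} concerns the fundamental group rather than isotropy); the paper invokes strong approximation only afterwards, in the proposition identifying spinor genera with classes when $G$ is non-compact at an infinite place.
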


 In other words, we have a natural action of
$\gal$ on the set $\Omega$ of spinor genera in the genus, and for
every pair of lattices $(M,N)$ there exists a unique element
$\rho(M,N)\in\gal$ taking the spinor genus of $M$ to the spinor
genus of $N$. The map $\rho$ satisfies the relation
$\rho(M,P)=\rho(M,N)\rho(N,P)$ for any three lattices $M$, $N$,
and $P$ in the genus. The class field $\Sigma^C_\Lambda$ depends
only on the genus of the lattice $\Lambda$.

\subparagraph{Example $A$.} Assume $q$ is odd. Let
$\{C_i\}_{i=1}^m$ be an affine cover of an irreducible smooth
projective curve $X$ over $\finitum_q$. For $i=1,\dots,m$, let
$\Lambda_i$ be a $C_i$-lattice in a fixed regular quadratic or
skew-hermitian $K(X)$-space $W$. Assume that the completions
$\Lambda_{i,\wp}$ and $\Lambda_{j,\wp}$ coincide whenever $\wp\in
C_i\cap C_j$. Then there exists a class field $\Sigma$ such that,
for every affine subset $D$ of $X$, if $M$ is the $D$-lattice
satisfying $M_\wp=\Lambda_{i,\wp}$ for $\wp\in C_i\cap D$, then
the spinor class field $\Sigma^D_M$ is the maximal subfield of
$\Sigma$ splitting completely at the infinite places of $D$. We
simply define $\Sigma$ as the spinor class field of the lattice
obtained by pasting together the lattices $\Lambda_i$ as sheaves.

It is apparent that
$\Sigma\supseteq\Sigma^{C_1}_{\Lambda_1}\cdots\Sigma^{C_m}_{\Lambda_m}$,
but equality does not need to hold. For example, assume
$X=\mathbb{P}^1(\finitum_q)=C_1\cup C_2$ with either $C_i$ affine.
Let $\Lambda$ be a free $X$-lattice with a basis $v_1,\dots,v_n$
for some $n\geq 3$, and let $\Lambda_i$ be the restriction of
$\Lambda$ to $C_i$, for $i=1,2$.  Let $Q$ be the quadratic form
defined by
$$Q\left(\sum_{i=1}^n g_iv_i\right)=\sum_{i=1}^{n-2} g_i^2+g_{n-1}g_n.$$
Then every spinor genus has a representative of the form
$$L(I)=\bigperpp_{i=1}^{n-2}\oink_Cv_i\perp(Iv_{n-1}\oplus
I^{-1}v_n),$$ for a suitable ideal $I$.This shows that
$\Sigma^{C_i}_{\Lambda_i}=K(X)$ for either affine set $C_i$, but
we claim that $\Sigma$ must contain a quadratic extension. The
claim follows by considering an open subset $D$ isomorphic to the
rational affine curve with equation $y=P(x,y)$ where $P$ is an
irreducible quadratic form on $\finitum_q[x,y]$. It suffices to
prove that the class group of the ring $\oink_X(D)$ has even
order. In fact, the ideal $I=(x,y)$ is not principal, but
$I^2=(y)$, as an easy computation shows.

 In fact, the spinor genus
of the lattice $L(I)$ can be computed from the divisor class
defining the ideal $I$. We show in \S \ref{s4} that not every
class of lattices in the genus of $L(\oink_X)$ has a
representative of this type. We do this by extending the theory of
representation fields to the sheaf setting.  When strong
approximation fails, as is always the case for $X$-lattices,
representation fields give only information on the number of
spinor genera representing a given lattice. Just as in the number
field case, representation fields might fail to exists, but they
do exist in many important families of examples.

\section{Spinor norms}\label{s1b}

To fix ideas, let $X$ be the irreducible smooth projective  algebraic curve
defined over a finite field $\finitum=\finitum_q$,
 and let $K(X)$ be its field of rational functions.
 We say that a semi-simple linear algebraic group
$G\subseteq\mathrm{GL}\Big(n,K(X)\Big)$, defined over $K(X)$, satisfies
condition \textbf{SN} if:
\begin{enumerate}
 \item\emph{The extension of the universal cover
$\phi:\tilde{G}\rightarrow G$ of $G$ to any separably closed field
$E$ containing $K(X)$ is surjective.}
 \item\emph{ For almost all points $\wp\in X$, any integral
 element $g$ of the completion $G_\wp$ has a pre-image in $\widetilde{G}_Z$
 for some unramified extension $Z/K_\wp$.}\end{enumerate}

 Let $F=\ker\phi$ be the fundamental group of a semi-simple group $G$
satisfying condition \textbf{SN}. The cohomology function
$\theta:G\rightarrow H^1\Big(K(X),F_E\Big)$ arising from the short exact
sequence $F_E\hookrightarrow\tilde{G}_E\twoheadrightarrow G_E$,
where $E$ is the separable closure of $K(X)$, is called the spinor
norm. It is also defined at any field containing $K(X)$. The spinor
norm on a completion $K_\wp$ of $K(X)$ is denoted $\theta_\wp$.
There exists also an adelic version of the spinor norm. It is the
map
$$\Theta:
G_\ad\rightarrow\prod_{\wp\in X}H(K_\wp,F),\qquad
\Theta(g)=\Big(\theta_\wp(g_\wp)\Big)_\wp,$$ where
$G_\ad\subseteq\prod_\wp G_\wp$ is the adelization of the group
$G$ \cite{Pla}.

\begin{lemma}\label{surj}
For any semi-simple group $G$ satisfying condition \textbf{SN} the
spinor norm is surjective over $K(X)$ and over any localization
$K_\wp$.
\end{lemma}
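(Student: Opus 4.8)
The plan is to read off surjectivity directly from the long exact sequence of Galois cohomology attached to the central isogeny $F_E\hookrightarrow\tilde{G}_E\twoheadrightarrow G_E$. Condition \textbf{SN}(1) is exactly what guarantees that $\phi$ is surjective on points over every separably closed extension of $K(X)$, so that the sequence of Galois modules $1\rightarrow F(E)\rightarrow\tilde{G}(E)\rightarrow G(E)\rightarrow 1$ is exact and, for every field $L$ with $K(X)\subseteq L\subseteq E$, yields an exact sequence of pointed sets
$$\tilde{G}(L)\longrightarrow G(L)\stackrel{\theta}{\longrightarrow}H^1(L,F)\stackrel{\iota}{\longrightarrow}H^1(L,\tilde{G}),$$
where $H^1(L,F)$ denotes cohomology with coefficients in the Galois module $F_E=F(E)$, as in the definition of the spinor norm. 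By exactness at the middle term, the image of $\theta$ is precisely the preimage of the trivial class under $\iota$. Hence it suffices to prove that $\iota$ is the trivial map, and for this it is enough to show that the pointed set $H^1(L,\tilde{G})$ reduces to its base point when $L=K(X)$ and when $L=K_\wp$.

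Since $\tilde{G}$ is simply connected and semi-simple, I would then invoke the two standard vanishing theorems. First, over each completion $K_\wp$, which is a non-archimedean local field, one has $H^1(K_\wp,\tilde{G})=1$ by the theorem of Kneser and Bruhat--Tits (valid in positive characteristic as well, see \cite{Pla}); this settles the local assertion at once. Second, over the global field $K(X)$ I would use the Hasse principle for simply connected semi-simple groups, established in the function-field setting by Harder, which states that the localization map $H^1\big(K(X),\tilde{G}\big)\longrightarrow\prod_\wp H^1(K_\wp,\tilde{G})$ is injective. Because a function field carries no archimedean places, every factor on the right is already trivial by the local theorem, so $H^1\big(K(X),\tilde{G}\big)=1$ and $\iota$ is trivial over $K(X)$ too.

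The step that genuinely needs care — and the reason condition \textbf{SN} is imposed — is the behaviour of the isogeny in characteristic $p$: when $F$ is not smooth the map $\tilde{G}\to G$ can fail to be surjective on $L$-points and the coboundary is then not defined. I would handle this exactly as in the definition of $\theta$, keeping the coefficients in the separable points $F_E$ and using \textbf{SN}(1) to secure surjectivity of $\tilde{G}_E\twoheadrightarrow G_E$ on those points, which is what makes the displayed sequence exact in the first place. Once that is in hand the argument is purely formal: surjectivity of $\theta$ over $K(X)$ and over each $K_\wp$ collapses to the triviality of $H^1$ for the simply connected cover, and this is precisely what the local Kneser--Bruhat--Tits theorem and Harder's global Hasse principle deliver.
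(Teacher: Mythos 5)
Your proposal is correct and follows essentially the same route as the paper: both arguments read surjectivity of $\theta$ off the exact cohomology sequence of $F_E\hookrightarrow\widetilde{G}_E\twoheadrightarrow G_E$ and reduce everything to the vanishing of $H^1(K,\widetilde{G})$ for $K=K(X)$ and $K=K_\wp$, which the paper cites from Serre's report and you attribute more explicitly to Kneser--Bruhat--Tits locally and Harder's Hasse principle globally. The extra care you take about exactness on separable points via \textbf{SN}(1) is a welcome elaboration but not a different method.
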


\begin{proof} Recall that $H^1(K,\widetilde{G})=\{1\}$ for both,
the global field $K=K(X)$ and the local field $K=K_\wp$
\cite{serre95}. Now the result follows by applying cohomology to
the short exact sequence
$F_E\hookrightarrow\widetilde{G}_E\twoheadrightarrow G_E$, where
$E$ is the separable closure of $K(X)$ or $K_\wp$.\end{proof}

Let $E$ be the separable closure of $K(X)$. In what follows, we say
that $G$ satisfies condition \textbf{RU} if it satisfies the
following conditions:
\begin{enumerate}
\item\emph{ The fundamental group $F_E$ of $G_E$ is cyclic, and
its order $n$ is not divisible by the characteristic of
$\finitum$. In particular, it is isomorphic to the group $\mu_n$
of $n$-roots of unity in $E$.} \item\emph{ There exists an
isomorphism between $F_E$ and $\mu_n$ commuting with the natural
action of the Galois group $\mathrm{Gal}\Big(E/K(X)\Big)$ on
either group.}\end{enumerate} Condition \textbf{RU} implies that
$H^1\Big(K(X),F_E\Big)\cong K(X)^{*n}/K(X)^*$. Note that if
$E(\wp)$ is the separable closure of $K_\wp$, then
$F_{E(\wp)}=F_E$, since $K_\wp$ contains no inseparable extensions
of $K(X)$ (\cite{weil}, \S VIII.6.). It follows that
$H^1(K_\wp,F_E)\cong K_\wp^{*n}/K_\wp^*$ for any $\wp\in X$.

\begin{lemma}\label{gr}
For any semi-simple group $G$ satisfying conditions \textbf{SN}
and \textbf{RU} the image of the adelic spinor norm $\Theta$ is
contained in $J_X/J_X^n$, where $J_X$ is the idele group of $K(X)$.
\end{lemma}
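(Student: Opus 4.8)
The plan is to pass to Kummer coordinates and then reduce the statement to a local integrality assertion controlled by condition \textbf{SN}. By condition \textbf{RU} and the computation recorded just above, for every $\wp\in X$ there is a Kummer identification $H^1(K_\wp,F)\cong K_\wp^*/(K_\wp^*)^n$, and these are compatible with the inclusions of the global field, so the target group $\prod_\wp H^1(K_\wp,F)$ of the adelic spinor norm becomes $\prod_\wp K_\wp^*/(K_\wp^*)^n$. Inside this product the idele quotient $J_X/J_X^n$ is the image of the coordinatewise reduction $J_X\to\prod_\wp K_\wp^*/(K_\wp^*)^n$, namely the subgroup of those families $(\bar x_\wp)_\wp$ whose representatives $x_\wp$ lie in $\oink_\wp^*$ for all but finitely many $\wp$; that this reduction has kernel exactly $J_X^n$ follows because an idele that is locally an $n$-th power everywhere admits at each place a local root of valuation $\ord_\wp(x_\wp)/n$, assembling into a genuine idele. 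Consequently the lemma is equivalent to the claim that, for each $g=(g_\wp)_\wp\in G_\ad$, the local spinor norm $\theta_\wp(g_\wp)$ represents a unit class in $K_\wp^*/(K_\wp^*)^n$ for all but finitely many $\wp$.

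To establish this I would use a single local observation. Suppose the class $\theta_\wp(g_\wp)$ is split by an unramified extension $Z/K_\wp$, meaning that $g_\wp$ admits a preimage in $\tilde G(Z)$; by naturality of the connecting homomorphism defining the spinor norm, this says precisely that $\theta_\wp(g_\wp)$ restricts to the trivial class in $H^1(Z,F)\cong Z^*/(Z^*)^n$, i.e. any representative $x$ lies in $(Z^*)^n$. Then $\ord_Z(x)\equiv 0\pmod n$; but $Z/K_\wp$ being unramified has ramification index one, so $\ord_\wp(x)=\ord_Z(x)\equiv 0\pmod n$, and hence $x\in\oink_\wp^*(K_\wp^*)^n$ is a unit class. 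Thus being split by an unramified extension forces $\theta_\wp(g_\wp)$ to be a unit modulo $n$-th powers.

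It remains to combine this with the hypotheses at almost all places. By the definition of the adelization, the component $g_\wp$ is integral for all but finitely many $\wp$; discarding a further finite set, condition \textbf{SN}, part (2), provides for each such integral $g_\wp$ a preimage in $\tilde G(Z)$ for some unramified $Z/K_\wp$. The local observation then shows $\theta_\wp(g_\wp)$ is a unit class at all but finitely many $\wp$, so the family $\Theta(g)=(\theta_\wp(g_\wp))_\wp$ reduces from an idele and lands in $J_X/J_X^n$, as claimed.

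The step I expect to require the most care is the local translation in the second paragraph: identifying ``$g_\wp$ has a preimage over an unramified $Z$'' with ``$\theta_\wp(g_\wp)$ restricts trivially to $H^1(Z,F)$'' through naturality of the connecting map, and then reading off from the unramified condition that the valuation is divisible by $n$. Everything else --- the Kummer dictionary supplied by \textbf{RU}, the kernel computation for $J_X\to\prod_\wp K_\wp^*/(K_\wp^*)^n$, and the almost-everywhere integrality from the adelization --- is formal once this translation is in place.
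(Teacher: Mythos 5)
Your proof is correct and follows essentially the same route as the paper: at almost all places the component $g_\wp$ is integral, condition \textbf{SN}(2) supplies a preimage over an unramified extension $Z/K_\wp$, and under the Kummer identification from \textbf{RU} the resulting class lies in $(Z^{*n}\cap K_\wp^*)/K_\wp^{*n}\subseteq\oink_\wp^*K_\wp^{*n}/K_\wp^{*n}$ because unramifiedness preserves valuations. The only difference is cosmetic (you phrase the local step via the kernel of restriction to $H^1(Z,F)$ where the paper uses the relative group $H^1(Z/K_\wp,F)$, and you spell out the kernel of $J_X\to\prod_\wp K_\wp^*/K_\wp^{*n}$, which the paper leaves implicit).
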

\begin{proof} Observe that for any extension $E/K_\wp$
such that a given element $g\in G_{K_\wp}=G_\wp$ is in the image of the
cover $\phi:\widetilde{G}_E\rightarrow G_E$, the spinor norm
$\theta_\wp(g)$ can be computed from the short exact sequence
$F_E\hookrightarrow\widetilde{G}_E\twoheadrightarrow\phi(\widetilde{G}_E)$,
whence it lies in $H^1(E/K_\wp,F_E)\cong(E^{*n}\cap
K_\wp^*)/K_\wp^{*n}$. The latter isomorphism is just the
coboundary map of the short exact sequence $F_E\hookrightarrow E^*
\twoheadrightarrow E^{*n}$.

By condition \textbf{SN} every
$g\in G_{\oink_\wp}$ is the image of some $h\in \widetilde{G}_E$
for some unramified extension $E/K_\wp$, for almost all points $\wp\in X$. This implies
$$\theta_\wp(g)\in (E^{*n}\cap K_\wp^*)/K_\wp^{*n}\subseteq
\oink_\wp^*K_\wp^{*n}/K_\wp^{*n},$$ whence
$\theta_\wp(G_{\oink_\wp})\subseteq
\oink_\wp^*K_\wp^{*n}/K_\wp^{*n}$ for almost all $\wp$.\end{proof}

The automorphism groups of the structures mentioned in the
introduction indeed satisfy these conditions.

\begin{lemma}
Orthogonal groups of regular quadratic forms and unitary groups of
regular quaternionic skew-hermitian forms satisfy conditions
\textbf{SN} and \textbf{RU} if the characteristic of the base
field $\finitum$ is not $2$.
\end{lemma}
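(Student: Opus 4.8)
The plan is to treat both families uniformly, since in each case the fundamental group turns out to be $\mu_2$, after which conditions \textbf{RU} and \textbf{SN} become almost formal in characteristic different from $2$. First I would identify $F_E=\ker\phi$. For the orthogonal group $G=\mathrm{SO}(Q)$ of a regular quadratic form the universal cover is $\spinomega$, and the kernel of $\spinomega\rightarrow\mathrm{SO}(Q)$ is the central $\mu_2=\{\pm1\}$. For the unitary group $G=U(\quat,h)$ of a regular quaternionic skew-hermitian form of rank $m$ over the quaternion algebra $\quat$, the group $G$ is a $K(X)$-form of $\mathrm{SO}_{2m}$ (type $D_m$), so $F_E$ is a $K(X)$-form of the fundamental group $\mu_2$ of $\mathrm{SO}_{2m}$ with $\tilde{G}=\hspinomega$; since the only automorphism of $\mu_2$ is the identity, $\mu_2$ admits no non-trivial forms and we again obtain $F_E\cong\mu_2$. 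This settles \textbf{RU} at once: $\mu_2$ is cyclic of order $n=2$, which is prime to the characteristic of $\finitum$ by hypothesis, and the Galois group acts trivially both on $\{\pm1\}\subseteq K(X)^*$ and on $\mu_2$ (the unique étale group scheme of order $2$ in this characteristic being the constant one), so the isomorphism $F_E\cong\mu_2$ is automatically $\mathrm{Gal}(E/K(X))$-equivariant.

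Next I would verify the first part of \textbf{SN}. Over any separably closed field $E\supseteq K(X)$ of characteristic $\neq2$ the polynomial $x^2-a$ is separable for every $a\in E^*$, so $E^*=(E^*)^2$ and hence $H^1(E,\mu_2)\cong E^*/(E^*)^2=\{1\}$. Applying cohomology to $\mu_2\hookrightarrow\tilde{G}_E\twoheadrightarrow G_E$ exactly as in the proof of Lemma \ref{surj} shows that $\phi$ is surjective on $E$-points, which is precisely the content of the first condition of \textbf{SN}.

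The substance of the lemma lies in the second part of \textbf{SN}, the local integral lifting at almost all points, and I expect this to be the main obstacle. At all but finitely many $\wp\in X$ the quaternion algebra $\quat$ splits and the residue characteristic is odd, so after Morita equivalence the skew-hermitian case reduces to the orthogonal one, and I would argue both simultaneously. At such a place the lattice defining the integral structure $G_{\oink_\wp}$ is unimodular, and by the Cartan--Dieudonn\'e theorem every integral isometry is a product of reflections $\tau_v$ in lattice vectors $v$; unimodularity together with odd residue characteristic allows the reflecting vectors to be chosen of unit length, and since the spinor norm of $\tau_v$ is $Q(v)\bmod(K_\wp^*)^2$, a unit, one gets $\theta_\wp(g)\in\oink_\wp^*(K_\wp^*)^2/(K_\wp^*)^2$ for every integral $g$. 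Writing $\theta_\wp(g)=[u]$ with $u$ a unit, the element $g$ lifts to $\tilde{G}$ over $Z=K_\wp(\sqrt u)$, exactly as in the proof of Lemma \ref{gr}; and because the residue characteristic is odd, adjoining the square root of a unit yields an unramified extension $Z/K_\wp$, which is the second condition of \textbf{SN}.

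The delicate points are therefore the choice of unit-norm reflecting vectors for unimodular lattices and the reduction of the quaternionic case to the orthogonal one via the splitting of $\quat$ at almost all places; both are standard in the local theory of quadratic and hermitian lattices, and I would invoke the O'Meara-type computation of the integral spinor norm at places of good reduction to make the first of these precise.
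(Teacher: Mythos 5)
Your proposal is correct and follows essentially the same route as the paper: identify the fundamental group as $\mu_2$ (making \textbf{RU} automatic since $\mu_2$ has no nontrivial automorphisms), verify surjectivity of the cover over separably closed fields, establish the integral lifting at good places from the fact that a unimodular lattice's integral orthogonal group is generated by reflections in unit-norm vectors (O'Meara \S 92), and reduce the quaternionic skew-hermitian case to the orthogonal one at the almost-all places where $\quat$ splits. The only cosmetic difference is that you obtain surjectivity over $E$ and the unramified lift from the vanishing of $H^1(E,\mu_2)$ and the restriction--coboundary exact sequence, whereas the paper exhibits the explicit preimage $\frac{vw}{\sqrt{q(v)q(w)}}$ of a product of symmetries; both are sound.
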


\begin{proof} The universal cover of the Orthogonal group of a
quadratic space $(V,q)$ over a field $K$ whose characteristic is
not $2$ is the spin group $\mathrm{Spin}(q)$. It is defined as the
set of elements $u$ in the Even Clifford Algebra $C^+(q)$
\cite{Om} satisfying $u\overline{u}=1$ and $uVu^{-1}=V$. Condition
\textbf{RU} follows since any element $u$ in the spin group
satisfying $uvu^{-1}=v$ for any $v\in V$ is in the base field
$K(X)$ (\cite{Om}, \S54:4), whence $u\overline{u}=1$ implies
$u=\pm1$.

If the field $E\supseteq K$ is separably closed, then every
product of two symmetries $\tau_v\tau_w$ is the image of
$\frac{vw}{\sqrt{q(v)q(w)}}\in \mathrm{Spin}(q)_E$. This elements
generate the special orthogonal group \cite{Om}. Furthermore, if
$K=K_\wp$ is a local field and $q$ is a unimodular integral
quadratic form at $\wp$, the integral orthogonal group is
generated by products of $2$ reflections $\tau_v\tau_w$ where
$q(v)$ and $q(w)$ are units (\cite{Om}, \S92:4), whence
$\frac{vw}{\sqrt{q(v)q(w)}}$ is defined over an unramified
extension.

For unitary groups of quaternionic skew-hermitian forms the proof
follows from the previous case, since
 any quaternion algebra splits on some separable quadratic extension of
 $K(X)$ (\cite{Reiner}, Thm. 7.15), which ramifies at only finitely many
 places (Theorem 1 in \S VIII.4 of \cite{weil}), and the unitary group of a skew-hermitian form on a split
 quaternion algebra is isomorphic to an orthogonal group (\cite{german}, Lemma 3).
\end{proof}

A similar result can be proved for the automorphism group of a
central simple algebra of dimension $n^2$ when $n$ is not
divisible by the characteristic of $\finitum$. However, we have a
stronger result:

\begin{lemma}\label{l24}
If $G$ is the automorphism group of a central simple algebra
$\alge$, the reduced norm map $\Theta=N:G_\ad\rightarrow
J_X/J_X^n$ satisfies the conclusions of Lemma \ref{gr} and Lemma
\ref{surj} regardless of the characteristic.
\end{lemma}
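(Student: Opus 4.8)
The plan is to bypass the cohomological definition of the spinor norm entirely and work directly with the reduced norm, which is given by a single characteristic polynomial and is therefore available over any field, irrespective of separability. First I would invoke the Skolem--Noether theorem to identify $G=\Aut(\alge)$ with $PGL_1(\alge)=\alge^*/K^*$, an inner form of $PGL_n$ whose universal cover is $SL_1(\alge)$ with fundamental group $\mu_n$. The reduced norm $\mathrm{Nrd}\colon\alge^*\to K^*$ satisfies $\mathrm{Nrd}(ca)=c^n\mathrm{Nrd}(a)$ for $c\in K^*$, so it descends to a well-defined homomorphism $N\colon \alge^*/K^*\to K^*/K^{*n}$, and likewise at each completion and adelically to $N\colon G_\ad\to J_X/J_X^n$. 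When the characteristic does not divide $n$ I would record that this $N$ coincides with the spinor norm of \S\ref{s1b}: the connecting map $PGL_1(\alge)(K)\to H^1(K,\mu_n)\cong K^*/K^{*n}$ sends the class of $a$ to $\mathrm{Nrd}(a)$ modulo $n$-th powers, the standard computation of the coboundary for inner forms of type $A$. This is what licenses taking $N$ as the spinor norm in all characteristics.

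For the conclusion of Lemma \ref{surj}, surjectivity, I would argue locally and globally. Locally, the reduced norm $\mathrm{Nrd}\colon\alge_\wp^*\to K_\wp^*$ of a central simple algebra over the local field $K_\wp$ is surjective (\cite{Reiner}), so $N$ surjects onto $K_\wp^*/K_\wp^{*n}$. Globally, the Hasse--Schilling--Maass norm theorem identifies $\mathrm{Nrd}(\alge^*)$ with the set of elements of $K^*$ positive at every real place ramifying $\alge$; since $K=K(X)$ is a function field it has no archimedean places, so $\mathrm{Nrd}(\alge^*)=K^*$ and $N$ surjects onto $K^*/K^{*n}$. Both inputs are insensitive to the characteristic, which is precisely the point of the lemma.

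For the conclusion of Lemma \ref{gr}, integrality at almost all places, I would note that for all but finitely many $\wp$ the algebra splits, $\alge_\wp\cong\matrici_n(K_\wp)$, with maximal order $\matrici_n(\oink_\wp)$; the integral subgroup $G_{\oink_\wp}$ is then the image of $\mathrm{GL}_n(\oink_\wp)$ and the reduced norm restricts to the determinant. Since $\det\mathrm{GL}_n(\oink_\wp)=\oink_\wp^*$, we obtain $N(G_{\oink_\wp})\subseteq\oink_\wp^*K_\wp^{*n}/K_\wp^{*n}$ for almost all $\wp$, which forces the adelic image into $J_X/J_X^n$, exactly the assertion of Lemma \ref{gr}.

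The main obstacle is conceptual rather than computational: justifying that $N$ is an acceptable substitute for the spinor norm when the characteristic of $\finitum$ divides $n$, the case in which condition \textbf{RU} fails because $\mu_n$ is no longer \'etale and the Galois cohomology $H^1(K,\mu_n)$ of \S\ref{s1b} no longer computes $K^*/K^{*n}$. The resolution is that $N$ is defined with no appeal to the \'etale site, and the two surjectivity ingredients are proved by local--global methods uniform in the characteristic; one needs only that the resulting map feeds correctly into the construction of \S\ref{s2}, for which its agreement with the coboundary in the tame case, together with the identical formal properties (surjectivity plus almost-everywhere integrality), suffices.
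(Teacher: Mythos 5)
Your proposal is correct and follows essentially the same route as the paper: the paper likewise disposes of surjectivity by citing the local and global norm theorems for central simple algebras (Weil, \S X.2 Prop.\ 6 and \S XI.3 Prop.\ 3, the latter being the Hasse--Schilling--Maass statement you invoke), and proves the integrality conclusion by splitting $\alge$ at almost all places, identifying $G_{\oink_\wp}$ with $PGL(n,\oink_\wp)$, and observing that the reduced norm of such an element lies in $\oink_\wp^*K_\wp^{*n}$. Your extra discussion of why the reduced norm replaces the coboundary when $\mathrm{char}(\finitum)\mid n$ matches the remark the paper makes immediately after the lemma.
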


\begin{proof}
 See (\cite{weil}, \S X.2, Prop 6) and (\cite{weil},
\S XI.3, Prop 3) for Lemma \ref{surj}.

Passing to a separable extension if needed we assume that $\alge$
is isomorphic to a matrix algebra $\matrici_t\Big(K(X)\Big)$.
Restricting to a smaller set of points $\wp$ if needed, we assume
that the isomorphism maps the standard basis of the matrix algebra
to a basis of the lattice of integral elements in $\alge$. Then
the integral elements of $G$ are just the automorphisms of
$\matrici_t(K_\wp)$ fixing $\matrici_t(\oink_\wp)$, i.e.
$PGL(n,\oink_\wp)$. Any element $g\in PGL(n,\oink_\wp)$ has
reduced norm in $\oink_\wp^*K_\wp^{*n}$ and the conclusion of
Lemma \ref{gr} follows.
\end{proof}

We define the spinor norm for the automorphism group of an algebra
$\alge$ as the reduced norm as above. When
$\mathrm{char}(\finitum)$ divides $n$, the map
$SL_1(\alge_E)\rightarrow\mathrm{Aut}_{E}\alge_E$ fails to be
surjective, so we cannot interpret the spinor norm as a
co-boundary. However, the explicit construction of $\Theta$ is not
used in the remaining of this work.

\section{$X$-lattices}\label{s2}

In this section we recall the properties of $X$-lattices that are
used in the sequel. Let $X$ be a smooth irreducible projective curve
over a finite field $\finitum$.  Let $K=K(X)$ be the field of rational functions
on $X$, and for every place $\wp\in X$ we let $K_\wp$ be the
completion at $\wp$ of $K$. We let $\oink_\wp$ be the ring of
integers at $\wp$, i.e., the completion of the ring of rational
functions defined at $\wp$. Let $V$ be a vector space over $K$.
 A coherent system of lattices in $X$ is a family
$\{\Lambda_\wp\}_{\wp\in X}$ satisfying the following conditions
(\cite{weil}, Ch. VI, p.97):
\begin{enumerate}
\item Every  $\Lambda_\wp$ is a $\oink_\wp$-lattice in $V_\wp$.
\item There exists an affine set $C\subset X$ and a lattice $L$
over the ring $\oink_X(C)$, of rational functions defined
everywhere in $C$, such that $L_\wp=\Lambda_\wp$ for every $\wp\in
C$.
\end{enumerate}
Let $V_\ad$ be the adelization of the space $V$, and let us
identify $V$ with a discrete subgroup of $V_\ad$ as in
\cite{weil}. Then for any coherent system
$\tilde{\Lambda}=\{\Lambda_\wp\}_{\wp\in X}$, the product
$\Lambda_\ad=\prod_{\wp\in X}\Lambda_\wp$ is an open and compact
subgroup of $V_\ad$, and every open and compact
$\oink_\ad$-submodule of $V_\ad$ arises in this way (\cite{weil},
\S VI, Prop 1). For every affine subset $C$ of $X$ we define
$\Lambda_{\ad,C}=\prod_{\wp\in X\backslash C}V_\wp\times
\prod_{\wp\in C} \Lambda_\wp$. Then
$\Lambda(C)=\Lambda_{\ad,C}\cap V$ defines a sheaf $\Lambda$ on
$X$. We  call a sheaf of this type an $X$-lattice. Equivalently,
an $X$-lattice is a locally free sub-sheaf of $V$, where $V$ is
identified with the corresponding constant sheaf. Thus defined,
$X$-lattices share some of the properties of usual lattices,
namely:
\begin{itemize}
\item An $X$-lattice $\Lambda$ is completely determined by the coherent system
$\{\Lambda_\wp\}_\wp$.
\item  A coherent system can be modified at a finite number of places to define a
new $X$-lattice. In particular, $X$-lattices can be defined by gluing together
lattices defined over an affine cover.
 \item The adelization $\mathrm{GL}_\ad(V)$ of the general linear group
$\mathrm{GL}(V)$ of $V$ acts on the set of lattices by acting on
the family of compact and open $\oink_\ad$-submodules of $V_\ad$.
\item If $V=\alge$ is an algebra, an $X$-lattice $\Da$ is an order
(i.e., a sheaf of orders) if and only if every completion is an
order. The same holds for maximal orders.
\end{itemize}
For the proofs see \cite{Om} or \cite{Pla}\footnote{Although
\cite{Pla} assumes characteristic $0$ throughout, this hypotheses
is not used for the results quoted here.}.
For any linear algebraic group acting on the space $V$, we have an induced
action of the adelic group $G_\ad$ on the set of
$X$-lattices in $V$. Two $X$-lattices are in the same
$G$-genus if they are in the same orbit under this action. Similarly,
classes are characterized as $G_{K(V)}$-orbits and
spinor genera as $G_{K(V)}\mathrm{ker}(\Theta)$-orbits.
 It follows from our main theorem that there exist only a finite
number of spinor genera in a genus for any group $G$ satisfying
\textbf{SN} and \textbf{RU}. On the contrary, the number of classes
in a genus is frequently infinite, as in the examples we show in \S\ref{s4}.

 Note that $X$-lattices
are locally free sheaves over the structure sheaf of $X$, and
therefore they are associated to vector bundles \cite{atiyah}. The
assumption made here that an $X$-lattice is contained in the
constant sheaf $V$ is not restrictive since for any locally free
sheaf $\Lambda$ on $X$ the sheaf $V=\Lambda\otimes_{\oink_X}K$ is
constant and a finite vector space over $K$. Furthermore, any
isomorphism between two $X$-lattice in a space $V$ can be extended
to a linear map on $V$, whence next result follows:

\begin{prop}
The set of isomorphism classes of vector bundles of rank $n$ over a curve $X$
defined over a finite field is in correspondence with the set of
double cosets
$\mathrm{GL}(K,n)\backslash\mathrm{GL}(\ad_K,n)/\mathrm{GL}(\oink_\ad,n)$.\qed
\end{prop}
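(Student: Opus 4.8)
The plan is to pass through $X$-lattices and the adelic description already developed in this section. First I would fix the standard space $V=K^n$ with its standard $X$-lattice $\Lambda^0$ whose completions are $\Lambda^0_\wp=\oink_\wp^n$, so that $\Lambda^0_\ad=\oink_\ad^n$. Given any rank-$n$ vector bundle, equivalently a locally free sheaf $\mathcal E$ of rank $n$ on $X$, its generic fiber $\mathcal E\otimes_{\oink_X}K$ is an $n$-dimensional $K$-vector space; choosing a $K$-isomorphism with $V$ realizes $\mathcal E$ as an $X$-lattice in $V$. This uses the remark preceding the statement that $V=\Lambda\otimes_{\oink_X}K$ is a constant sheaf, so that no generality is lost by assuming $X$-lattices sit inside the constant sheaf $V$. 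Thus every isomorphism class of bundles has a representative among $X$-lattices in $V$.

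Next I would identify the set of all $X$-lattices in $V$ with $\mathrm{GL}(\ad_K,n)/\mathrm{GL}(\oink_\ad,n)$. By the properties recalled above, an $X$-lattice is completely determined by its coherent system $\{\Lambda_\wp\}_\wp$, equivalently by the open compact $\oink_\ad$-submodule $\Lambda_\ad=\prod_\wp\Lambda_\wp$ of $V_\ad$. Since any two $\oink_\wp$-lattices in $V_\wp$ are free of rank $n$ and hence differ by an element of $\mathrm{GL}(n,K_\wp)$, and coincide with $\oink_\wp^n$ for almost all $\wp$, the group $\mathrm{GL}(\ad_K,n)$ acts transitively on these submodules; the stabilizer of $\Lambda^0_\ad=\oink_\ad^n$ is exactly $\mathrm{GL}(\oink_\ad,n)$. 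Hence $g\mapsto g\Lambda^0$ furnishes a bijection between $\mathrm{GL}(\ad_K,n)/\mathrm{GL}(\oink_\ad,n)$ and the set of $X$-lattices in $V$.

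Finally I would descend to isomorphism classes. Two $X$-lattices $\Lambda$ and $M$ in $V$ are isomorphic as sheaves, equivalently the associated bundles are isomorphic, if and only if there is a $K$-linear automorphism $g\in\mathrm{GL}(V)=\mathrm{GL}(K,n)$ with $g(\Lambda)=M$; this is precisely the content of the remark that any isomorphism between two $X$-lattices extends to a linear map on $V$. Therefore isomorphism classes of lattices in $V$ are the $\mathrm{GL}(K,n)$-orbits on the lattice set, i.e. the double cosets $\mathrm{GL}(K,n)\backslash\mathrm{GL}(\ad_K,n)/\mathrm{GL}(\oink_\ad,n)$. Combined with the surjectivity from the first step, together with the observation that changing the generic trivialization alters the chosen representative only within a $\mathrm{GL}(K,n)$-orbit, this yields the asserted correspondence.

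I would expect the only delicate point to be the passage from abstract sheaf isomorphisms to global $K$-linear maps in the last step; but this is supplied by the extension property quoted just before the statement, so the argument reduces to assembling the transitivity of the adelic action and the stabilizer computation.
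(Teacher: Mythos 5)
Your argument is correct and is precisely the one the paper intends: the proposition is stated with no written proof because it is meant to follow from the immediately preceding remarks (every rank-$n$ bundle is realized as an $X$-lattice in the constant sheaf $V$, the adelic group acts transitively on $X$-lattices with $\mathrm{GL}(\oink_\ad,n)$ stabilizing the standard lattice, and sheaf isomorphisms extend to elements of $\mathrm{GL}(K,n)$). You have simply made those steps explicit, so there is nothing to add.
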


It is not true in general that this set of double cosets is finite
or that their elements are parameterized by their images under the
reduced norm as it is the case for lattices over affine subsets.
This follows easily from the classification results for vector
bundles over arbitrary fields. See for example \cite{atiyah}.

An $X$-lattice $\Lambda$ in a space $V$ is completely decomposable
if $\Lambda=\bigoplus_i J_iv_i$, where $\{v_1,\dots,v_n\}$ is a
basis of the space $V$ and $J_1,\dots,J_n$ are $X$-lattices in
$K(X)$. Note that every such lattice has the form $J_i=\mathfrak{L}^{B_i}$, where
$$\mathfrak{L}^{B}(C)=\left\{f\in K(X)\Big| \mathrm{div}(f)|_C\geq -B|_C\right\},$$
 for some divisor $B$ on $X$.
%An important example is the lattice
% $$\left(\begin{array}{ccc}J_{1,1}&\cdots&J_{1,n}\\
% \vdots&\ddots&\vdots\\
% J_{n,1}&\cdots&J_{n,n}\end{array}\right)$$ in the space
% $V=\matrici_n(K)$.
Not every $X$-lattice is
completely decomposable, as follows from the corresponding result
for vector bundles \cite{atiyah}. In next section we need the following result:

\begin{lemma}
There is a correspondence between conjugacy classes of maximal
$X$-orders in $\matrici_n(K)$ and isomorphism classes of $n$-dimensional vector
bundles over $X$ up to multiplication by invertible bundles.
\end{lemma}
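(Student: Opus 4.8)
The plan is to realize the correspondence through the endomorphism functor $\Lambda\mapsto\mathrm{End}_{\oink_X}(\Lambda)$, where $\Lambda$ ranges over rank-$n$ vector bundles ($X$-lattices) inside a fixed $n$-dimensional $K$-space $V$ with $\mathrm{End}_K(V)\cong\matrici_n(K)$. First I would verify that this is well defined on objects. Since an $X$-lattice is determined by its coherent system $\{\Lambda_\wp\}_\wp$ and coincides with a lattice over an affine ring away from finitely many places, the sheaf $\mathrm{End}_{\oink_X}(\Lambda)$ has completions $\mathrm{End}_{\oink_\wp}(\Lambda_\wp)$, each a maximal order of $\matrici_n(K_\wp)$ and equal to the standard order $\matrici_n(\oink_\wp)$ for almost all $\wp$. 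Because maximality of an order is a local condition, as recalled in this section, $\mathrm{End}_{\oink_X}(\Lambda)$ is a maximal $X$-order.

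The core local input is the standard fact that over the discrete valuation ring $\oink_\wp$ every maximal order of $\matrici_n(K_\wp)$ is $\mathrm{End}_{\oink_\wp}(\Lambda_\wp)$ for a full lattice $\Lambda_\wp$, and that $\mathrm{End}_{\oink_\wp}(\Lambda_\wp)=\mathrm{End}_{\oink_\wp}(\Lambda'_\wp)$ holds if and only if $\Lambda'_\wp=c_\wp\Lambda_\wp$ for a scalar $c_\wp\in K_\wp^*$ (a lattice with prescribed endomorphism ring is a module over $\matrici_n(\oink_\wp)$, hence homothetic to the standard lattice). For surjectivity I would pick an affine $C$ over which the given maximal $X$-order $\Da$ is coherent with a maximal order over the Dedekind ring $\oink_X(C)$; the latter is $\mathrm{End}_{\oink_X(C)}(L)$ for a projective rank-$n$ module $L$, which fixes $\Lambda_\wp=L_\wp$ for $\wp\in C$. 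At the finitely many remaining places I choose any lattice with endomorphism ring $\Da_\wp$; modifying the coherent system of $L$ at these finitely many places is permitted and produces an $X$-lattice $\Lambda$ with $\mathrm{End}_{\oink_X}(\Lambda)=\Da$.

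It then remains to match the two equivalence relations. By Skolem--Noether, two maximal $X$-orders are conjugate precisely when $\Da'=g\Da g^{-1}$ for some $g\in\mathrm{GL}(V)$, and $g\,\mathrm{End}_{\oink_X}(\Lambda)\,g^{-1}=\mathrm{End}_{\oink_X}(g\Lambda)$. Hence conjugacy of $\mathrm{End}_{\oink_X}(\Lambda)$ and $\mathrm{End}_{\oink_X}(\Lambda')$ amounts to $\mathrm{End}_{\oink_X}(g\Lambda)=\mathrm{End}_{\oink_X}(\Lambda')$ for some $g$. By the local fact above, this equality forces $\Lambda'_\wp=c_\wp(g\Lambda)_\wp$ at every place, with $c_\wp$ a unit for almost all $\wp$; the scalars therefore assemble into an invertible $X$-lattice (line bundle) $J$ in $K$ with $\Lambda'=J(g\Lambda)$. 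As $g\Lambda\cong\Lambda$ as abstract bundles, this says exactly $\Lambda'\cong\Lambda\otimes J$. Conversely, given an isomorphism $\Lambda'\cong\Lambda\otimes J$, the fact recalled just above that any isomorphism of $X$-lattices in $V$ extends to an element of $\mathrm{GL}(V)$ lets me realize it as a $g\in\mathrm{GL}(V)$, producing $\Da'=g\Da g^{-1}$; this yields the bijection.

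The main obstacle I anticipate is the careful bookkeeping of the scalar ambiguity: the passage from the purely local homothety factors $c_\wp$ to a single global invertible bundle $J$ is precisely where the clause \emph{up to multiplication by invertible bundles} is produced, and one must check that these factors are almost everywhere units (so that $J$ is a genuine $X$-lattice in $K$) and that the resulting class is independent of the choice of $g$ and of the local lattices. The remaining steps—maximality of the endomorphism sheaf and surjectivity—are local-to-global assembly arguments resting only on the coherent-system formalism and the Dedekind-domain case already available.
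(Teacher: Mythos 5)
Your proposal is correct and follows essentially the same route as the paper: the paper also identifies each maximal $X$-order with the stabilizer (equivalently, endomorphism ring) of an $X$-lattice, uses the local fact that the stabilizer of $\Da_\wp$ is $K_\wp^*\Da_\wp^*$ (your homothety statement), and assembles the resulting scalars into an idele, hence an invertible bundle. The only difference is presentational---the paper phrases surjectivity via adelic conjugation of the standard order $\Da_0$ rather than via your affine-plus-finitely-many-places patching, and leaves the Skolem--Noether/extension-of-isomorphisms step implicit.
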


\begin{proof}
Since all maximal orders are locally conjugate at all places, any
maximal $X$-order on $\alge$ has the form $b\Da_0b^{-1}$ where
$b\in\alge_{\ad}$ is a matrix with adelic coefficients and
$\Da_0\cong\matrici_n(\oink_X)$ is the sheave of matrices with
regular coefficients. We know that the adelization $\Da_{0\ad}$ is
the ring of all adelic matrices $c$ satisfying
$c(\oink_X^n)=\oink_X^n$. It follows that $b\Da_0b^{-1}$ is
the ring of all adelic matrices $c$ satisfying $c\Lambda=\Lambda$
where $\Lambda=b\Lambda_0=b(\oink_X^n)$. Since the stabilizer of a
local order $\Da_\wp$ is $\Da_\wp^*K_\wp^*$, it follows that two
$X$-lattices $\Lambda_1$ and $\Lambda_2$ corresponds to the same
maximal order, if and only if $\Lambda_1=d\Lambda_2$ for some
 $d$ in the group $J_X$ of ideles on $X$. The result follows since the idele $d$ generates
 the invertible bundle $\mathfrak{L}^{-\mathrm{div}(d)}$.
\end{proof}

Note that split orders correspond to completely decomposable bundles. In particular, not
all maximal orders are split.

\subparagraph{Proof of Theorem \ref{main}} The set of spinor
genera in a genus is in one to one correspondence with the Abelian
group
\begin{equation}\label{abln}G_\ad/G^{\Lambda}_{\ad}G_{K(V)}\ker(\Theta).
\end{equation} If the group
$G$ satisfies condition \textbf{RU}, then
$H(K_\wp,F)=K_\wp^*/K_\wp^{*n}$. By Lemma \ref{gr}, the image of
$\Theta$ is contained in $J_X/J_X^n$, where $J_X$ is the idele
group of $K(X)$. Since the spinor norm is surjective in both
$K(X)$ and $K_\wp$ (Lemma \ref{surj}), the group (\ref{abln}) is
isomorphic to $J_X/K(X)^*H(\Lambda)$, where $H(\Lambda)$ is the
pre-image in $J_X$ of the group $\Theta(G_\ad^\Lambda)\subseteq
J_X/J_X^n$. We let $\Sigma^C_\Lambda$ be the class field
associated to the open subgroup $K(X)^*H(\Lambda)$  (\cite{weil},
\S XIII.9).  The set of $G$-spinor genera in the $G$-genus of
$\Lambda$ is a principal homogeneous space, via Artin Map, for the
group $\gal=\mathrm{Gal}\Big(\Sigma^C_\Lambda/K(X)\Big)$. The
element of $\gal$ sending the spinor genus of an $X$-lattice $M$
to the spinor genus of a second $X$-lattice $M'$ is defined by
$\rho(M,M')=[a,\Sigma_\Lambda/K(X)]$, where
$x\mapsto[x,\Sigma_\Lambda/K(X)]$ denotes the Artin map, and $a$
is any element of $J_X$ satisfying $\Theta(g)=aJ_X^n$ for some
$g\in G_\ad$ such that $M'=g(M)$. The las statement follows from
the identity
$$H(L)=H(\Lambda)\left(\prod_{\wp\in C\backslash D}\theta_\wp(G_\wp)\right)=
H(\Lambda)\left(\prod_{\wp\in C\backslash D}K_\wp^*\right),$$
which follows from condition \textbf{SN} and the surjectivity of
$\theta_\wp$.
\qed

\paragraph{Example A (continued).}Let $\Lambda$ be the free
$X$-lattice with basis $\{v_i\}_{i=1}^n$ in the quadratic space
described in the introduction. Then
$\Sigma_\Lambda=\mathbb{L}(t)$, where $\mathbb{L}$ is the only
quadratic extension of $\finitum$, by a straightforward local
computation. To find a representative in every spinor genus we
observe that the adelic orthogonal element $g=a(\lambda)$ defined by
$g_\wp(v_i)=v_i$ for $i=1,\dots,n-2$, $g_\wp(v_{n-1})=\lambda_\wp
v_{n-1}$, and $g_\wp(v_n)=\lambda_\wp^{-1} v_n$ has spinor norm
$\lambda=(\lambda_\wp)_\wp$. Bow take $u\in J_X$,
and let $B=\mathrm{div}(u)$ be the corresponding divisor.
To find a representative $L_u$ of the spinor genus corresponding to the class of
 $u$ in $J_X/J_X^2H(\Lambda)$ we set $\lambda=u$ above, whence
$$L_u=a(u)\Lambda=\bigperpp_{i=1}^{n-2}\oink_Xv_i\perp(\mathfrak{L}^Bv_{n-1}\oplus
\mathfrak{L}^{-B}v_n).$$ Note that $L_u=L(B)$ depends only on the divisor $B$ of $u$.

\begin{prop}\label{p31b}
Assume that $C$ is a (necessarily proper) open set such that
$G_{X\backslash C}$ is non-compact. Then two $C$-lattices $N$ and
$M$ in the same $G$-genus are in the same $G$-class if and only if
$\rho(N,M)$ is the trivial element in
$\mathrm{Gal}\Big(\Sigma^C_N/K(X)\Big)$.
\end{prop}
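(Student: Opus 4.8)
The plan is to reduce the statement to the fact that, under the stated non-compactness hypothesis, each $G$-spinor genus inside the genus of $N$ consists of a single $G$-class; this is precisely the assertion that strong approximation holds for the universal cover $\tilde G$ relative to the set of places $S=X\backslash C$. Recall from Theorem \ref{main} that the set of spinor genera in the genus is a principal homogeneous space under $\gal=\mathrm{Gal}(\Sigma^C_N/K(X))$, with $\rho(N,M)$ the unique element carrying the spinor genus of $N$ to that of $M$; hence $\rho(N,M)$ is trivial if and only if $N$ and $M$ lie in the same spinor genus. Since a $G$-class is always contained in a single $G$-spinor genus (a $G_{K(V)}$-orbit sits inside a $G_{K(V)}\ker(\Theta)$-orbit), the implication ``same class $\Rightarrow$ $\rho(N,M)=1$'' is immediate, and the whole content lies in the converse.

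Writing $N=\Lambda$ as the base point and $M=g\Lambda$ for some $g\in G_\ad$, the lattices $N$ and $M$ lie in the same class, genus, or spinor genus exactly when $g$ belongs to $G_{K(V)}G_\ad^\Lambda$, to $G_\ad$, or to $G_{K(V)}\ker(\Theta)G_\ad^\Lambda$ respectively, where $G_\ad^\Lambda$ is the stabilizer of $\Lambda$. Thus the converse reduces to the single inclusion $\ker(\Theta)\subseteq G_{K(V)}G_\ad^\Lambda$. First I would identify the kernel of the adelic spinor norm: applying cohomology to $F_E\hookrightarrow\tilde G_E\twoheadrightarrow G_E$ place by place shows $\theta_\wp(g_\wp)=1$ if and only if $g_\wp\in\phi(\tilde G_{K_\wp})$, and condition \textbf{SN} lets one choose integral local lifts at almost all $\wp$, so that $\ker(\Theta)=\phi(\tilde G_\ad)$.

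Next I would invoke strong approximation. The stabilizer $G_\ad^\Lambda$ contains the full local group $G_\wp$ at every $\wp\in S$ and the compact open subgroup $G_\wp^{\Lambda_\wp}$ at every $\wp\in C$; its preimage $U=\prod_{\wp\in C}\phi^{-1}(G_\wp^{\Lambda_\wp})$ is a compact open subgroup of the $S$-away adeles $\tilde G_\ad^S$, because $\phi$ has finite kernel. Given $g=\phi(\tilde g)\in\ker(\Theta)$ with $\tilde g\in\tilde G_\ad$, density of $\tilde G_{K(X)}$ in $\tilde G_\ad^S$ yields $\gamma\in\tilde G_{K(X)}$ with $(\gamma^{-1}\tilde g)_\wp\in\phi^{-1}(G_\wp^{\Lambda_\wp})$ for all $\wp\in C$; then $\phi(\gamma)^{-1}g$ lies in $G_\wp^{\Lambda_\wp}$ for $\wp\in C$ and in $G_\wp$ for $\wp\in S$, hence in $G_\ad^\Lambda$, giving $g\in G_{K(V)}G_\ad^\Lambda$ as required. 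Combined with the previous paragraph this shows every spinor genus in the genus of $N$ is a single class, so $\rho(N,M)=1$ forces $M$ and $N$ into the same class.

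The hard part will be justifying the appeal to strong approximation in this precise form. One must know that $\tilde G_{K(X)}$ is dense in $\tilde G_\ad^S$, which requires $\tilde G$ to be simply connected and semi-simple (granted) and, crucially, non-compact over $S$; the hypothesis is phrased for $G_{X\backslash C}$, and since $\phi$ is a central isogeny with finite kernel, $G_\wp$ is compact if and only if $\tilde G_\wp$ is, so non-compactness transfers to $\tilde G$. If $G$ is not $K(X)$-simple one must further check that no $K(X)$-simple factor becomes anisotropic over $S$, so that the Kneser--Platonov--Prasad theorem applies factor by factor; this is the technical point that genuinely uses that $S=X\backslash C$ is nonempty (forcing $C$ proper) together with the non-compactness assumption. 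The positive-characteristic version of strong approximation needed here is available in \cite{Pla}.
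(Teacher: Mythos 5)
Your proof is correct and follows essentially the same route as the paper: reduce to showing that each spinor genus is a single class, lift an element of $\ker(\Theta)$ to $\widetilde{G}_\ad$, and apply strong approximation for the simply connected cover with respect to $S=X\backslash C$ together with the openness of lattice stabilizers. You simply make explicit several points the paper leaves implicit (the identification $\ker(\Theta)=\phi(\widetilde{G}_\ad)$ via condition \textbf{SN}, and the transfer of non-compactness from $G$ to $\widetilde{G}$), which is a welcome elaboration rather than a different argument.
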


\begin{proof}
By the strong approximation theorem over function fields
\cite{strong}, the universal cover $\widetilde{G}$ of $G$ has the
strong approximation property with respect to the set
$S=X\backslash C$. Assume that $\rho(N,M)$ is trivial. Then $N$
and $M$ are in the same spinor genus, i.e., there exist $g\in G$
and $h\in\ker\Theta$ such that $gh(N)=M$. Then any pre-image
$\tilde{h}$ of $h$ can be arbitrarily approximated by an element
$\tilde{f}$ in $\widetilde{G}$ whose image $f\in G$ approximates
$h$. Since lattice stabilizers in $G_\ad$ are open, the result
follows.
\end{proof}

Note that in example \textbf{A}, the group $G$ has
strong aproximation with respect two every non-empty finite subset
of $X$. It follows that two lattices are in the same spinor genus
if and only if they are conjugate over every affine subset of $X$.

\begin{coro}
Let $\Da$ and $\Da'$ be two maximal $C$-orders in the central
simple algebra $\alge$ that is not totally ramified at one or more
infinite places of $C$. Then $\Da$ and $\Da'$ are conjugate if and
only if $\rho(\Da,\Da')=\mathrm{id}$.
\end{coro}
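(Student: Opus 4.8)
The plan is to deduce the statement directly from Proposition \ref{p31b} applied to the group $G=\Aut_K(\alge)$, the adjoint semisimple group which is an inner form of $PGL_n$ and which, by Lemma \ref{l24}, satisfies conditions \textbf{SN} and \textbf{RU} with the reduced norm playing the role of the spinor norm. Two things must be checked: that $\Da$ and $\Da'$ lie in a common $G$-genus, so that $\rho(\Da,\Da')$ is defined and membership in the same $G$-class is precisely conjugacy; and that the non-compactness hypothesis on $G_{X\backslash C}$ demanded by Proposition \ref{p31b} is supplied by the ramification assumption on $\alge$.

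The first point is immediate. By Skolem--Noether, all maximal orders of the local algebra $\alge_\wp$ are conjugate at every place $\wp$, so any two maximal $C$-orders are locally conjugate and therefore lie in the same $G$-genus. Consequently $\rho(\Da,\Da')$ is a well-defined element of $\mathrm{Gal}\Big(\Sigma^C_\Da/K\Big)$, and $\Da$ and $\Da'$ are conjugate (by an element of $\alge^*$) if and only if they lie in the same $G$-class.

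For the second point, recall that the infinite places of $C$ are exactly the places of $S=X\backslash C$. By Skolem--Noether again, $G_\wp=\Aut_{K_\wp}(\alge_\wp)\cong\alge_\wp^*/K_\wp^*$ at each place. If $\alge_\wp$ is totally ramified, i.e. a division algebra over $K_\wp$, then $\alge_\wp^*/K_\wp^*$ sits in an exact sequence $1\to\oink_{\alge_\wp}^*/\oink_\wp^*\to\alge_\wp^*/K_\wp^*\to\enteri/n\enteri\to 1$, with compact kernel and finite quotient, hence is compact. If on the contrary $\alge_\wp$ is \emph{not} totally ramified, then $\alge_\wp\cong\matrici_r(D)$ with $r\geq 2$ for some division algebra $D$ over $K_\wp$; the non-central unipotent element $1+E_{12}$ then generates a non-compact one-parameter subgroup which survives in the quotient by the center, so $G_\wp$ is non-compact. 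By hypothesis such a place $\wp$ occurs in $S$, so the single factor $G_\wp$ already forces the product $G_{X\backslash C}=\prod_{\wp\in S}G_\wp$ to be non-compact.

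With both points in place, Proposition \ref{p31b} applies verbatim and yields the asserted equivalence. The only non-formal step is the compactness dichotomy for $\alge_\wp^*/K_\wp^*$; I expect the main care to be needed in identifying ``totally ramified'' with ``$\alge_\wp$ is a division algebra'' and in exhibiting the non-compact direction, the remainder being a direct specialization of the proposition.
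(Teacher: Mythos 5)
Your proposal is correct and follows exactly the route the paper intends: the corollary is stated without proof as an immediate consequence of Proposition \ref{p31b}, and your two verifications (local conjugacy of maximal orders giving a common genus, and non-compactness of $G_\wp$ at a non-totally-ramified infinite place) are precisely the routine checks being left to the reader.
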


\begin{coro}
Assume $\mathrm{char}(\finitum)\neq2$.
Let $N$ and $M$ be two $C$-lattices in the quadratic space $W$ of
dimension at least $3$ (or a skew-hermitian space of rank at least
2) that belong to the same genus. Assume that $W$ is isotropic at
one or more infinite places of $C$. Then $N$ and $M$ are in the
same $G$-class if and only if $\rho(N,M)=\mathrm{id}$.
\end{coro}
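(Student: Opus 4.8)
The plan is to obtain this statement as a direct application of Proposition \ref{p31b}. Since $C$ is a non-empty Zariski-open subset of the smooth projective curve $X$, its complement $S=X\backslash C$ consists of finitely many closed points, and the group $G_{X\backslash C}=\prod_{\wp\in S}G_\wp$ is non-compact exactly when one of its finitely many factors $G_\wp$ is non-compact. Thus the entire task is to verify the hypothesis of Proposition \ref{p31b}, namely that $G_{X\backslash C}$ is non-compact, from the assumption that $W$ is isotropic at some infinite place $\wp\in S$. Once this is done, Proposition \ref{p31b} yields at once that $N$ and $M$ lie in the same $G$-class if and only if $\rho(N,M)$ is trivial in $\mathrm{Gal}\Big(\Sigma^C_N/K(X)\Big)$.

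Here $G$ is the special orthogonal group $\mathrm{SO}(W)$ in the quadratic case and the unitary group of the skew-hermitian form in the quaternionic case. In both situations the hypothesis on the dimension (at least $3$), respectively the rank (at least $2$), together with $\mathrm{char}(\finitum)\neq2$, guarantees that $G$ is semi-simple and, by the lemma verifying \textbf{SN} and \textbf{RU} for orthogonal and unitary groups, that $\rho$ and the spinor class field $\Sigma^C_N$ of Theorem \ref{main} are defined. To establish the non-compactness I would fix an infinite place $\wp\in S$ at which $W$ is isotropic. In the quadratic case isotropy means that $W_\wp$ contains a hyperbolic plane, that is, isotropic vectors $v,w$ with $q(v)=q(w)=0$ and $B(v,w)\neq0$ for the associated bilinear form $B$; the linear maps fixing the orthogonal complement of this plane and acting by $v\mapsto tv$, $w\mapsto t^{-1}w$ for $t\in K_\wp^*$ preserve $q$ and lie in $\mathrm{SO}(W)_\wp$, forming a one-dimensional split torus, and since $K_\wp^*$ is non-compact so is $\mathrm{SO}(W)_\wp$. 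In the quaternionic skew-hermitian case I would argue as in the lemma establishing \textbf{SN} and \textbf{RU}: passing to a separable extension over which the quaternion algebra splits, the unitary group becomes an orthogonal group and isotropy of the skew-hermitian form is carried to isotropy of the associated quadratic form, so again the local group has positive $K_\wp$-rank. More conceptually, a reductive group over a local field is compact if and only if it is anisotropic, and isotropy of the defining form is precisely positivity of the $K_\wp$-rank; hence $G_\wp$ is non-compact in either case.

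With $G_{X\backslash C}$ shown non-compact, Proposition \ref{p31b} applies verbatim and gives the desired equivalence; note that the non-compactness of $G_\wp$ is equivalent to that of its universal cover $\widetilde{G}_\wp$, the kernel $F$ being finite, which is what makes strong approximation available in the proof of that proposition. The step I expect to require the most care is the skew-hermitian case: one must ensure that isotropy of the skew-hermitian form forces non-compactness of the unitary group even at a place where the underlying quaternion algebra is ramified (a division algebra), and not merely where it splits. This is exactly what the general criterion that a reductive group over a local field is compact if and only if it is anisotropic provides, so that no hypothesis on the ramification of the quaternion algebra beyond the stated isotropy of $W$ is needed.
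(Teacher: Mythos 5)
Your proposal is correct and follows exactly the route the paper intends: the corollary is stated as an immediate consequence of Proposition \ref{p31b}, with the only content being that isotropy of $W$ at an infinite place of $C$ forces non-compactness of the corresponding local group $G_\wp$ (via a split torus in the quadratic case, and via splitting the quaternion algebra or the general compact-iff-anisotropic criterion in the skew-hermitian case). Your verification of conditions \textbf{SN} and \textbf{RU} and of semi-simplicity under the dimension and characteristic hypotheses matches the paper's earlier lemma, so nothing further is needed.
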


Next corollary follows now from (\cite{Om}, p.170) and
(\cite{scharlau}, p.363).

\begin{coro}
Let $N$ and $M$ be two $C$-lattices in the quadratic space $W$ of
dimension at least $5$ (or the skew-hermitian space $W$ of rank at
least 4) that belong to the same genus. If $C$ is any proper open
subset of $X$, then $N$ and $M$ are in the same $G$-class if and
only if $\rho(N,M)=\mathrm{id}$.
\end{coro}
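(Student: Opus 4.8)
The plan is to reduce the final Corollary to the combination of the previous Corollary (which already handles the isotropic-at-infinity case) and a strong approximation argument that upgrades the rank/dimension hypothesis. The key observation is that once the dimension of the quadratic space $W$ is at least $5$ (respectively the skew-hermitian rank is at least $4$), the relevant orthogonal or unitary group becomes \emph{isotropic}, hence non-compact, at every place where $W$ has sufficiently large rank. So the strategy is to verify that the hypotheses of Proposition \ref{p31b} are met at \emph{some} place of $S = X \backslash C$, for any proper open $C$, and then invoke that proposition directly.

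\medskip

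\textbf{First}, I would recall the local structure theory cited in the statement: by (\cite{Om}, p.170) every regular quadratic form over a local field in dimension at least $5$ is isotropic, and by (\cite{scharlau}, p.363) the analogous fact holds for skew-hermitian forms of rank at least $4$ over a quaternion division algebra. \textbf{Second}, I would use these to conclude that at every place $\wp \in X$, and in particular at any place of the non-empty set $S = X\backslash C$ (non-empty since $C$ is proper), the space $W_\wp$ is isotropic. An isotropic regular quadratic (or skew-hermitian) space of rank $\geq 1$ over a local field has non-compact orthogonal (respectively unitary) group, since the isometry group of a hyperbolic plane already contains a split torus $K_\wp^*$, which is non-compact. \textbf{Third}, this gives $G_{X\backslash C}$ non-compact, so the hypothesis of Proposition \ref{p31b} is satisfied, and I would apply that proposition verbatim to conclude that $N$ and $M$ are in the same $G$-class if and only if $\rho(N,M) = \mathrm{id}$.

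\medskip

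The structural point that makes this work, and that distinguishes it from the previous Corollary, is that the dimension bound $\geq 5$ (rank $\geq 4$) forces isotropy \emph{unconditionally at every place}, whereas the previous Corollary only assumed isotropy at the infinite places and therefore needed that assumption as an explicit hypothesis. Here the hypothesis becomes automatic, so no condition on $W$ beyond the dimension bound is needed.

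\medskip

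\textbf{The main obstacle} I expect is bookkeeping rather than conceptual: I must be careful that the ``infinite places'' in Proposition \ref{p31b} play no privileged role here, and that non-compactness of $G_{X\backslash C}$ is genuinely a \emph{local} condition at some single place of $S$ rather than a condition that secretly requires an infinite place. In the function-field setting there is no distinguished archimedean place, so I would make explicit that Proposition \ref{p31b} only needs $G$ to be non-compact at \emph{one} place of $S$, and that the isotropy argument supplies such a place at once. A secondary subtlety is the skew-hermitian case, where one should check that the relevant quaternion algebra, even when it ramifies at $\wp$, still admits an isotropic skew-hermitian form in rank $\geq 4$, so that the reduction of the unitary group to an orthogonal group (as in the Lemma preceding Lemma \ref{l24}) preserves non-compactness; this follows from the cited classification but deserves a one-line check.
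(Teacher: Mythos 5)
Your proposal is correct and follows exactly the route the paper intends: the citations (\cite{Om}, p.~170) and (\cite{scharlau}, p.~363) give local isotropy of $W$ in dimension $\geq 5$ (resp.\ rank $\geq 4$) at every place, in particular at some place of the non-empty set $X\backslash C$, so the non-compactness hypothesis of Proposition \ref{p31b} (equivalently, the isotropy hypothesis of the preceding Corollary) is automatic. Your added one-line check in the ramified quaternionic case is a reasonable piece of diligence but does not change the argument.
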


Recall that the space $\lambda(X)$ of global sections of a lattice $\lambda$ is a finite
dimensional vector space over finite field $\finitum=\oink_X(X)$ (\cite{weil}, Chapter VI). Furthermore,
 for any $n$-linear map
$\tau:V^n\rightarrow W$ satisfying
$\tau\left(\lambda^n\right)\subseteq M$, where $\lambda$ is a lattice
in $V$ and $M$ is a lattice in the $K(X)$-vector space $W$, there exists an induced map
$\tilde(\tau):\lambda(X)^n\rightarrow M(X)$ that is $n$-linear over $\finitum$. In particular:
\begin{enumerate}
\item If $\lambda$ is an order, then $\lambda(X)$ is a $\finitum$-algebra.
\item If $\lambda$ is an integral quadratic lattice, then $\lambda(X)$ is, naturally, a quadratic space over
$\finitum$.
\end{enumerate}
This observation is used throughout next section.

\paragraph{Example: Maximal orders in $\matrici_2(K)$.}
If $n=2$ every split maximal order has the form
$$\Da_B=\bimatrix{\oink_X}{\mathfrak{L}^B}{\mathfrak{L}^{-B}}{\oink_X},$$
where $B$ is a divisor of $X$ defined over $\finitum$. If $B$ is a
principal divisor, the ring of global sections $\Da_B(X)$ is
isomorphic to the matrix algebra $\matrici_2(\finitum)$. If $B$ is
not principal, then $\mathfrak{L}^B$ and $\mathfrak{L}^{-B}$
cannot have a global section simultaneously. In fact, if
$\mathrm{div}(f)+B\geq0$ and $\mathrm{div}(g)-B\geq0$ then
$B=\mathrm{div}(g)=\mathrm{div}(f^{-1})$. We conclude that either
$\Da_B(X)\cong \finitum\times\finitum$ or $\Da_B(X)\cong
(\finitum\times\finitum)\oplus V$, where $V$ is an ideal of
nilpotency degree $2$. Note that the dimension of $V$ tends to
$\infty$ with the degree of $B$ by Riemann Roch's Theorem, whence
we conclude that there exist infinite many conjugacy classes of
maximal $X$-orders in $\alge$. In fact, we can give a more precise
result:

\begin{prop}
The maximal orders $\Da_B$ and $\Da_D$ defined in the previous example
 are conjugate if and only if $B$ is linearly equivalent to either $D$ or $-D$.
\end{prop}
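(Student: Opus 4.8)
The plan is to reduce the statement to the classification of decomposable rank-two vector bundles on $X$, via the correspondence established in the preceding lemma between conjugacy classes of maximal $X$-orders in $\matrici_2(K)$ and isomorphism classes of rank-two vector bundles modulo twisting by invertible bundles. First I would pin down the bundle attached to $\Da_B$. A direct matrix computation shows that $\Da_B$ is precisely the stabilizer, among adelic matrices, of the $X$-lattice $\Lambda_B=\oink_Xv_1\oplus\mathfrak{L}^{-B}v_2$: the conditions $a\,\oink_X\subseteq\oink_X$, $b\,\mathfrak{L}^{-B}\subseteq\oink_X$, $c\,\oink_X\subseteq\mathfrak{L}^{-B}$, $d\,\mathfrak{L}^{-B}\subseteq\mathfrak{L}^{-B}$ on the entries of $\bimatrix{a}{b}{c}{d}$ reproduce exactly the four sheaves $\oink_X,\mathfrak{L}^{B},\mathfrak{L}^{-B},\oink_X$ appearing in $\Da_B$. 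Hence, under the lemma, $\Da_B$ corresponds to the class of the bundle $\oink_X\oplus\mathfrak{L}^{-B}$, and since twisting by an invertible bundle is allowed, equally to the class of $\mathfrak{L}^{B}\oplus\oink_X$.

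By the lemma, $\Da_B$ and $\Da_D$ are conjugate if and only if there is a divisor $A$ with $\oink_X\oplus\mathfrak{L}^{-B}\cong\mathfrak{L}^{A}\otimes(\oink_X\oplus\mathfrak{L}^{-D})=\mathfrak{L}^{A}\oplus\mathfrak{L}^{A-D}$ as vector bundles. The key step is to compare the two direct-sum decompositions. Line bundles are indecomposable, and by Atiyah's theorem the decomposition of a vector bundle on a complete curve into indecomposable summands is unique up to permutation and isomorphism (the Krull--Schmidt property) \cite{atiyah}. Therefore the displayed isomorphism holds for some $A$ if and only if the unordered pairs of isomorphism classes $\{\oink_X,\mathfrak{L}^{-B}\}$ and $\{\mathfrak{L}^{A},\mathfrak{L}^{A-D}\}$ coincide; here I use that $\mathfrak{L}^{E}\cong\mathfrak{L}^{E'}$ as line bundles exactly when $E\sim E'$ (linear equivalence of divisors).

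Matching the two unordered pairs leaves two cases. Either $A\sim0$ and $A-D\sim-B$, which forces $B\sim D$; or $A\sim-B$ and $A-D\sim0$, which forces $A\sim D$ and hence $B\sim-D$. Conversely, each of $B\sim D$ and $B\sim-D$ produces an admissible $A$ (take $A\sim0$ in the first case, $A\sim-B\sim D$ in the second), so the corresponding bundles are isomorphic up to twist and the orders are conjugate. This yields the stated equivalence.

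The step I expect to be the main obstacle is the correct invocation of the Krull--Schmidt uniqueness for vector bundles on the complete curve $X$: it is what guarantees that a decomposable bundle determines its line-bundle summands as an unordered pair, and hence that no unexpected isomorphism can identify $\oink_X\oplus\mathfrak{L}^{-B}$ with $\mathfrak{L}^{A}\oplus\mathfrak{L}^{A-D}$ outside the two cases above. Once that uniqueness is in hand, the remaining bookkeeping---translating bundle isomorphism into linear equivalence of divisors and tracking how the twist introduces the sign ambiguity $\pm D$---is routine.
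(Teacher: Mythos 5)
Your proof is correct, but it follows a genuinely different route from the paper's. The paper argues directly on the rings of global sections: it computes $\Da_B(X)$ as a finite-dimensional $\finitum$-algebra, observes that for non-principal $B$ exactly one of $\mathfrak{L}^{B}(X)$, $\mathfrak{L}^{-B}(X)$ can be nonzero, and uses the resulting structure of the $K$-span of $\Da_B(X)$ to force any conjugating matrix $U$ to be triangular or anti-diagonal, from which $B\sim D$ or $B\sim -D$ is read off entry by entry. You instead invoke the paper's own lemma identifying conjugacy classes of maximal $X$-orders in $\matrici_2(K)$ with rank-two bundles up to twist, correctly identify $\Da_B$ as the endomorphism order of $\oink_X\oplus\mathfrak{L}^{-B}$, and then appeal to the Krull--Schmidt property of vector bundles on a complete curve to match the line-bundle summands as an unordered pair. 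Your version is more conceptual and shorter, and it makes visible exactly where projectivity of $X$ enters: over an affine Dedekind ring Steinitz's theorem would collapse $\mathfrak{L}^{B}\oplus\mathfrak{L}^{-B}$ to $\oink\oplus\oink$ and the statement would fail, whereas Krull--Schmidt on the complete curve blocks this. The paper's version is more elementary and self-contained (it needs nothing beyond Riemann--Roch-type statements about global sections), at the cost of a case analysis. One small bibliographical caution: the Krull--Schmidt theorem for sheaves on complete varieties is Atiyah's 1956 paper \emph{On the Krull--Schmidt theorem with application to sheaves}, not the elliptic-curve paper cited in the bibliography, so you would want to add that reference rather than lean on \cite{atiyah} as written.
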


\begin{proof}
If $B$ is principal, and if $\Da_B$ is conjugate to $\Da_D$, we
must have $\Da_D(X)\cong\matrici_2(\finitum)$, and therefore $D$
is principal. We can assume therefore that neither $B$ nor $D$ is
principal. Replacing $B$ or $D$ by $-B$ or $-D$ if needed, we may
assume $B,D\geq0$. Let $U$ be a global matrix such that
$\Da_B=U\Da_DU^{-1}$. From the explicit description of $\Da_B(X)$
given earlier, we conclude that the $\finitum$-vector spaces
$\mathfrak{L}^B(X)$ and $\mathfrak{L}^D(X)$ have the same
dimension. Furthermore, if $W_B$ and $W_D$ denote the $K$-vector
spaces spanned by $\Da_B(X)$ and $\Da_D(X)$ respectively, then
$W_B=UW_DU^{-1}$.  There are two cases two be considered:
\begin{enumerate}
\item If $\mathfrak{L}^B(X)\neq\{0\}$, then
$W_B=W_D=KE_{1,1}\oplus KE_{2,2}\oplus KE_{1,2}$. \item If
$\mathfrak{L}^B(X)=\{0\}$, then $W_B=W_D=KE_{1,1}\oplus KE_{2,2}$.
\end{enumerate}
 In the first case, we conclude that $U$ has the form
\scriptsize $\bimatrix ab0c$ \normalsize. In particular we must have
$$\mathfrak{L}^{-B}E_{2,1}=E_{2,2}\Da_BE_{1,1}=
E_{2,2}(U\Da_DU^{-1})E_{1,1}=a^{-1}c\mathfrak{L}^{-D}E_{2,1}.$$ We
conclude that $B=D+\mathrm{div}(ac^{-1})$, and therefore $B$ and
$D$ are linearly equivalent. In the second case $U$ has either the form
\scriptsize $\bimatrix a00c$ \normalsize, which is similar to the previous case,
or the form
\scriptsize $\bimatrix 0ac0$ \normalsize, so that $B=-D+\mathrm{div}(ac^{-1})$,
and $B$ is linearly equivalent to $-D$.
\end{proof}

\section{Representation fields}\label{s4}

Let $\Lambda$ be an $X$-lattice in a $K(X)$-vector space $V$ as
before, and let $M$ be an $X$-lattice in a subspace $W\subseteq V$. We allow
the case $V=W$. Assume $M\subseteq\Lambda$ in all that follows.
Assume that $G$ is a semi-simple linear algebraic sub-group of
$\mathrm{GL}(V)$ satisfying the conditions \textbf{SN} and
\textbf{RU}. Following the notations in \cite{spinor} we call an
element $u\in G_\ad$ a generator for $\Lambda|M$ if $M\subseteq
u\Lambda$. Local generators are defined analogously. Note that
$u\in G_\ad$ is a generator if and only if $u_\wp$ is a local
generator for every place $\wp$. As usual we say that $M$ is
$G$-represented by an $X$-lattice $N$ in $V$, or that $N$
$G$-represents $M$, if $N$ contains a lattice in the the $G$-orbit
of $M$. We say that a set $\Psi$ of lattices $G$-represents $M$ if
some element of $\Psi$ does. In this setting, we have the
following proposition, whose proof is transliteration of the one in
the number field case \cite{spinor}, and therefore is omited.

\begin{prop} In the above notations, let $\Lambda'$ be an
$X$-lattice in the $G$-genus of $\Lambda$.
 The lattice $M$ is
$G$-represented by the spinor genus of $\Lambda'$ if and only if
there exists a generator $u$ for $\Lambda|M$ such that
$\rho(\Lambda,\Lambda')=[\Theta(u),\Sigma_\Lambda/K(X)]$.\qed
\end{prop}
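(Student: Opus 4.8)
The plan is to reduce the representation statement to a parametrization of the lattices in the genus of $\Lambda$ that contain $M$, and then to read off the relevant spinor genus through the class-field dictionary furnished by Theorem \ref{main}. The central observation is that, because of the standing hypothesis $M\subseteq\Lambda$, a lattice $N$ in the $G$-genus of $\Lambda$ contains $M$ if and only if $N=u\Lambda$ for some generator $u$ for $\Lambda|M$: every $N$ in the genus has the form $u\Lambda$ with $u\in G_\ad$ by the very definition of the genus as a $G_\ad$-orbit, and $M\subseteq u\Lambda$ is precisely the defining condition for $u$ to be a generator. Combined with the identity $\rho(\Lambda,u\Lambda)=[\Theta(u),\Sigma_\Lambda/K(X)]$, which is exactly the recipe for $\rho$ recorded after Theorem \ref{main} (take the element realizing $u\Lambda=u(\Lambda)$ to be $u$ itself, and $a=\Theta(u)$), this turns the proposition into a formal equivalence. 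I would begin by noting that for $u\in G_\ad$ the class $\Theta(u)$ is a well-defined element of $J_X/J_X^n$, so that its Artin symbol in $\gal$ makes sense.

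First I would prove the forward implication. Assume $M$ is $G$-represented by the spinor genus of $\Lambda'$, so that some $N_0$ in the spinor genus of $\Lambda'$ contains a $G_{K(V)}$-translate $h(M)$ of $M$. Applying $h^{-1}$, set $N=h^{-1}(N_0)$, so that $M\subseteq N$; since the spinor genus is by definition a $G_{K(V)}\ker(\Theta)$-orbit and $h^{-1}\in G_{K(V)}$, the lattice $N$ still lies in the spinor genus of $\Lambda'$. Now $N$ belongs to the genus of $\Lambda'$, which is the genus of $\Lambda$, so $N=u\Lambda$ for some $u\in G_\ad$; by the central observation $u$ is a generator for $\Lambda|M$. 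Finally $\rho(\Lambda,\Lambda')=\rho(\Lambda,N)=[\Theta(u),\Sigma_\Lambda/K(X)]$, the first equality holding because $\rho(\Lambda,-)$ depends only on the spinor genus of its argument and $N$ lies in the spinor genus of $\Lambda'$, the second being the definition of $\rho$. This exhibits the required generator.

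For the converse I would run the same chain in reverse. Given a generator $u$ with $\rho(\Lambda,\Lambda')=[\Theta(u),\Sigma_\Lambda/K(X)]$, put $N=u\Lambda$. Then $N$ lies in the genus of $\Lambda$, and $M\subseteq u\Lambda=N$ because $u$ is a generator. Moreover $\rho(\Lambda,N)=[\Theta(u),\Sigma_\Lambda/K(X)]=\rho(\Lambda,\Lambda')$, so by the principal-homogeneous-space structure of Theorem \ref{main} (equivalently, the injectivity of $\rho(\Lambda,-)$ on spinor genera) $N$ and $\Lambda'$ lie in the same spinor genus. Since $M\subseteq N$, the lattice $N$ in the spinor genus of $\Lambda'$ trivially $G$-represents $M$, as desired.

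The bulk of the work is thus bookkeeping inside the correspondence of Theorem \ref{main}; the hard part will be the two reductions that make ``$G$-represents'' interchangeable with ``contains $M$''. One must use that translation by a global element of $G_{K(V)}$ leaves the spinor genus unchanged, so that a representation $h(M)\subseteq N_0$ may be replaced by an honest inclusion $M\subseteq N$ within the same spinor genus, and one must lean on the standing hypothesis $M\subseteq\Lambda$, which both guarantees that generators exist at all and makes the assignment $u\mapsto u\Lambda$ an exhaustive parametrization of the genus-lattices containing $M$. Unlike the class-coincidence corollaries, no appeal to strong approximation is needed here, since the assertion concerns only spinor genera; the whole argument is accordingly a transliteration of the number-field proof of \cite{spinor}, with the idele group $J_X$ of $K(X)$ playing the role of the idele class group in the number-field setting.
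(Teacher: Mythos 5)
Your proof is correct, and it is essentially the argument the paper has in mind: the paper omits the proof, stating that it is a transliteration of the number-field case in \cite{spinor}, and your parametrization of the genus-lattices containing $M$ by generators $u\mapsto u\Lambda$, combined with the fact that $\rho(\Lambda,-)$ is constant on spinor genera and separates them, is exactly that argument. No gaps.
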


We denote by $H(\Lambda|M)$ the pre-image in $J_X$ of the set of
spinor norms $\Theta(u)$ of all generators $u$ for $\Lambda|M$. If
$K(X)^*H(\Lambda|M)$ is a group, the corresponding class field is
called the representation field $F(\Lambda|M)$ for $\Lambda|M$.
The spinor genus of a lattice $\Lambda'$ represents $M$ if and
only if $\rho(\Lambda,\Lambda')$ is trivial on $F(\Lambda|M)$.
The proof of the following fact is also completely analogous
to the number field case (\cite{Hsia98} and
\cite{repsh}):
\begin{prop} The
representation field always exist for lattices in quadratic or
quaternionic skew-hermitian spaces.\qed\end{prop}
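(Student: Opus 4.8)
The plan is to reduce the global assertion---that $K(X)^*H(\Lambda|M)$ is a subgroup of $J_X$---to a purely local statement at each place $\wp$, and then to establish that local statement by the structure theory of quadratic lattices over the completion $K_\wp$. For the reduction, recall from \S\ref{s2} that $u\in G_\ad$ is a generator for $\Lambda|M$ if and only if each component $u_\wp$ is a local generator, i.e. $M_\wp\subseteq u_\wp\Lambda_\wp$. Hence the set of adelic spinor norms of generators is the restricted product $\prod'_\wp X_\wp$, where $X_\wp=\theta_\wp\big(\{u_\wp\in G_\wp: M_\wp\subseteq u_\wp\Lambda_\wp\}\big)\subseteq K_\wp^*/K_\wp^{*n}$; the restriction is controlled because $M_\wp=\Lambda_\wp$ for almost all $\wp$, where the identity is a generator and, by Lemma~\ref{gr}, every integral element is a generator with spinor norm in $\oink_\wp^*K_\wp^{*n}/K_\wp^{*n}$. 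Consequently $H(\Lambda|M)$ is exactly the pre-image in $J_X$ of this restricted product, and if each $X_\wp$ is a \emph{subgroup} of $K_\wp^*/K_\wp^{*n}$ then $H(\Lambda|M)$ is an open subgroup of $J_X$, so $K(X)^*H(\Lambda|M)$ is a group and the representation field exists. Everything thus comes down to the local group property of $X_\wp$.

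The core step is therefore the local lemma: for a regular quadratic space over $K_\wp$ (with $\mathrm{char}\,\finitum\neq2$) and lattices $M_\wp\subseteq\Lambda_\wp$, allowing $M_\wp$ to lie in a proper subspace, the set $X_\wp$ of spinor norms of local generators is a subgroup of $K_\wp^*/K_\wp^{*n}$. I would follow the method of \cite{Hsia98} and \cite{repsh}: fix a Jordan splitting of $\Lambda_\wp$, describe the generators $u_\wp$ with $M_\wp\subseteq u_\wp\Lambda_\wp$ in terms of that splitting, compute their spinor norms as products of symmetries, and verify directly that the resulting set is closed under multiplication and inverses. Since the spinor norm is multiplicative along products of symmetries, closure reduces to a finite bookkeeping over the Jordan constituents that meet $M_\wp$. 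The skew-hermitian quaternionic case is then reduced to the quadratic one exactly as in the last Lemma of \S\ref{s1b}: over a separable quadratic splitting field of the quaternion algebra the unitary group becomes an orthogonal group, and the finitely many places where the algebra ramifies are subsumed into the bad-place analysis.

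The main obstacle is the local group property itself, and specifically the closure of $X_\wp$ under multiplication at the finitely many places where $\Lambda_\wp$ fails to be unimodular or where $M_\wp$ is a proper sublattice of a proper subspace; there the generators are genuinely more abundant than the integral orthogonal group $O(\Lambda_\wp)$, and one must rule out a parity obstruction to multiplicativity of the spinor norm image. I expect this to be more tractable than over number fields: because $\finitum$ has odd characteristic every residue field has odd characteristic, so there are \emph{no} dyadic places, and the delicate dyadic case analysis of \cite{Om} and \cite{Hsia98}---the principal source of exceptional behaviour over number fields---never arises. The function field argument should therefore be a faithful, and in fact slightly simplified, transliteration of the references, which is why it is omitted.
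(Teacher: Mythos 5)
Your proposal follows essentially the route the paper intends: the paper omits the proof precisely because it consists of the global-to-local reduction you describe (the set of adelic spinor norms of generators is a restricted product of local sets $X_\wp$, so everything hinges on each $X_\wp$ being a subgroup of $K_\wp^*/K_\wp^{*n}$) followed by the local computations of \cite{Hsia98} and \cite{repsh}, and your observation that odd characteristic eliminates the dyadic places --- the only genuinely hard part of the number-field argument --- is exactly the reason the transliteration goes through. One caveat: your reduction of the skew-hermitian case to the orthogonal one "as in the last Lemma of \S\ref{s1b}" only works at places where the quaternion algebra splits (there Morita equivalence transfers lattices as well as groups); at the finitely many ramified places the completion is a division algebra, the unitary group is not an orthogonal group over $K_\wp$, and the group property of $X_\wp$ requires the direct local analysis of skew-hermitian lattices over the local maximal order carried out in \cite{repsh} --- it is not subsumed in the quadratic bad-place analysis, though it is covered by the reference you cite.
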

 The corresponding result for orders in quaternion algebras follows from
 the case of quadratic forms just as in the number field case, but it cannot be
 extended to algebras of higher dimension \cite{abelianos}.

It must be kept in mind, however, that the classification of the
lattices in a genus into spinor genera is a much coarser invariant
than in the number field case, as the number of classes in a genus is
usually infinite. The following example illustrate this:

\paragraph{Example B (continued).}Let $X$ and $\Lambda$ be as in \S4,
but assume $n=4$ and the field of definition of $X$ is
$\finitum=\finitum_q$ with $q=4t+3$. In particular, the space of
global sections of $L(B)$ is
$$\Lambda(X)=\oink_X(X)v_1+\oink_X(X)v_2+\mathfrak{L}^B(X)v_3+\mathfrak{L}^{-B}(X)v_4.$$
By Riemann-Rochs Theorem, the $\finitum$-dimension of $\Lambda(X)$
tends to infinity with the degree of $B$. In particular there
exists infinitely many classes of such lattices, while they belong
to the same spinor genus as long as $\mathrm{deg}(B)$ is even. We claim that none of
these lattices represents $M=\oink_Xv_3+\oink_Xv_4$ when
$B$ is not principal. In fact, if $B$ fails to be principal, then either
$\mathfrak{L}^B(X)$ or $\mathfrak{L}^{-B}(X)$ has dimension $0$, whence the space of global sections
$\Lambda(X)=[\oink_X(X)v_1+\oink_X(X)v_2]\perp U$ where $U=\mathfrak{L}^B(X)v_3+\mathfrak{L}^{-B}(X)v_4$
 is the radical, and $Z=\oink_X(X)v_1+\oink_X(X)v_2$ is anisotropic by the choice of $q$,
  while $M(X)$ is a hyperbolic plane.
We note however that the theory of representation by spinor genera tell us that half of the spinor genera
in the genus of $\Lambda$ must represent $M$. It is not hard to see that the image of the spinor norm in this case is
$H(\Lambda)=J_X^n\oink_\ad^*$. It follows that there are more than one spinor genus representing $M$ whenever the
torsion subgroup of the Picard group of $X$ has even order. In this case there must exist classes in the genus of
$\Lambda$ that are not in the class of any of the lattices $L(B)$.

Recall that an order of maximal rank in a central simple algebra $\alge_{K(V)}$ is said to be split if it represents
the n-fold cartesian product $\oink_X\times\cdots\times\oink_X$.
A maximal $X$-order $\Da$ in $\matrici_n(K)$ is split if and only if
the corresponding vector bundle is a direct product of one
dimensional vector bundles, i.e., it
corresponds to an $X$-lattice of the type
$$\Lambda=\mathfrak{L}^{B_1}\times\cdots\times\mathfrak{L}^{B_n}.$$
 Note that if the order of diagonal matrices $\bigoplus_i\oink_XE_{i,i}$
is contained in $\Da$, every diagonal matric unit $E_{i,i}$ is a global section
of $\Da$, an therefore
$\Da=\sum_{i,j}J_{i,j}E_{i,j}$ for some invertible bundle
$J_{i,j}\subseteq K$, and the same holds for the rings of global
sections. A simple computation shows
$J_{i,j}\cong\mathfrak{L}^{B_i-B_j}$. An argument similar to that in the previous example
 can be used to prove next result:

\begin{prop}\label{maingt}
If $N$ is the total number of spinor genera of maximal $X$-orders
in the matrix algebra $\matrici_n(K)$, where $K$ is the field of
functions on a smooth projective curve $X$ over a finite field,
then at least $\frac{N}n-1$ spinor genera contain non-split
$X$-orders.
\end{prop}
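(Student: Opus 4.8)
The plan is to pass to vector bundles and deduce the bound from the existence of indecomposable bundles with prescribed determinant. By Lemma \ref{l24} and the correspondence lemma of \S\ref{s2}, every maximal $X$-order in $\matrici_n(K)$ has the form $\mathrm{End}(\mathcal{E})$ for a rank-$n$ bundle $\mathcal{E}$, two such are conjugate exactly when the bundles agree up to twist by an invertible sheaf, and the spinor norm is the reduced norm $\det$. Since $\det(\mathcal{E}\otimes\mathfrak{L}^{B})=\det(\mathcal{E})\otimes\mathfrak{L}^{nB}$, the spinor genus of $\mathrm{End}(\mathcal{E})$ depends only on the class of $\det(\mathcal{E})$ in $\mathrm{Pic}(X)/n\mathrm{Pic}(X)$, and this identifies the set of spinor genera with $\mathrm{Pic}(X)/n\mathrm{Pic}(X)$; thus $N=|\mathrm{Pic}(X)/n\mathrm{Pic}(X)|$. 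As the degree map $\mathrm{Pic}(X)\to\enteri$ is surjective one has $\mathrm{Pic}(X)/n\mathrm{Pic}(X)\cong\bigl(\mathrm{Pic}^{0}(X)/n\mathrm{Pic}^{0}(X)\bigr)\oplus\enteri/n\enteri$, so $N/n=|\mathrm{Pic}^{0}(X)/n\mathrm{Pic}^{0}(X)|$ and reduction of the degree modulo $n$ sorts the $N$ spinor genera into $n$ fibres of size $N/n$. Finally, exactly as in the previous example, for $\Da=\mathrm{End}(\mathcal{E})$ the section algebra $\Da(X)=\Hom_{\oink_X}(\mathcal{E},\mathcal{E})$ is a finite $\finitum$-algebra, and a complete system of $n$ orthogonal idempotents in $\Da(X)$ is the same datum as a splitting of $\mathcal{E}$ into line bundles; hence $\Da$ is non-split whenever $\mathcal{E}$ has an indecomposable summand of rank $\geq 2$.

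With these translations the construction is immediate. Fix an integer $e$ with $\gcd(n,e)=1$, say $e=1$. For each line bundle $\mathcal{D}$ of degree $e$ choose a stable rank-$n$ bundle $\mathcal{E}$ with $\det(\mathcal{E})=\mathcal{D}$; being stable it is indecomposable, so $\mathrm{End}(\mathcal{E})$ is a non-split maximal order. As $\mathcal{D}$ runs through $\mathrm{Pic}^{e}(X)$ its image in $\mathrm{Pic}(X)/n\mathrm{Pic}(X)$ runs through the entire degree-$e$ fibre, a coset of $\mathrm{Pic}^{0}(X)/n\mathrm{Pic}^{0}(X)$ of cardinality $N/n$. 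Therefore every spinor genus in that fibre contains a non-split order, and at least $N/n$ — a fortiori at least $N/n-1$ — spinor genera contain non-split $X$-orders.

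The step that needs care, and which I expect to be the only real obstacle, is the existence over the finite field $\finitum$ of a stable bundle with a prescribed $\finitum$-rational determinant of degree prime to $n$: one must produce an $\finitum$-point on the geometrically nonempty moduli space of such bundles, which holds because in the coprime case this space is smooth, projective and rationally connected, hence has $\finitum$-rational points; for $g=1$ one may instead quote Atiyah's classification, in which indecomposables of coprime rank and degree have determinant covering all of $\mathrm{Pic}^{e}(X)$, while for $g=0$ one has $N/n=1$ and nothing to prove. I would also record the softer route indicated by the appeal to the previous example, which yields precisely the stated $N/n-1$: restrict to the degree-$0$ fibre and use the section algebra $\Da(X)$ as the obstruction, showing that the canonical split representative $\mathfrak{L}^{B_{1}}\times\cdots\times\mathfrak{L}^{B_{n}}$ in a nontrivial class cannot have the section algebra of a totally split order; comparing with the count $N/n$ of that fibre leaves at most the trivial spinor genus of $\matrici_n(\oink_X)$ unaccounted for, which is the source of the $-1$.
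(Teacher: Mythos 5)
Your proposal takes a genuinely different route from the paper, and in outline it is sound --- it would even give the slightly stronger count $N/n$ rather than $N/n-1$. The translation steps (spinor genera of maximal orders $\leftrightarrow$ $\mathrm{Pic}(X)/n\mathrm{Pic}(X)$ via the determinant, split orders $\leftrightarrow$ completely decomposable bundles, the degree map cutting the $N$ genera into $n$ fibres of size $N/n$) all agree with what the paper establishes in the first half of its proof and in the remark following it. Where you diverge is in how a non-split order is produced in each spinor genus of a fixed degree class. The paper never mentions stable bundles: it forms $\Ha=\mathbb{L}\otimes_{\finitum}\oink_X$ for the degree-$n$ constant field extension $\mathbb{L}/\finitum$, embeds it into $\Da_0=\matrici_n(\oink_X)$, and uses the representation-field formalism of \S\ref{s4} plus class field theory --- $K^*n_{L/K}(J_L)$ is the kernel of the Artin map for $L=\mathbb{L}K$, hence exactly the ideles whose divisors have degree in $n\enteri$ --- to conclude that \emph{every} spinor genus of degree $\equiv 0\ (\mathrm{mod}\ n)$ contains a maximal order $\Da$ with $\mathbb{L}\hookrightarrow\Da(X)$. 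A separate block-triangularity argument on section rings shows that a \emph{split} order admitting such an embedding must be $\matrici_n(\oink_X)$; that single exception is the source of the $-1$. This is elementary modulo class field theory and stays entirely inside the machinery already built in the paper.

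The cost of your route is concentrated in the step you yourself flag, and it is heavier than your sketch acknowledges. You need, over $\finitum$ itself, an indecomposable rank-$n$ bundle with \emph{every} prescribed $\finitum$-rational determinant of degree prime to $n$. Rational connectedness of the moduli space together with Esnault's theorem only produces an $\finitum$-point of the \emph{coarse} moduli space; one must still descend the corresponding geometric stable bundle to $\finitum$ (the obstruction lies in $\mathrm{Br}(\finitum)=\{0\}$ since stable bundles are simple, but this has to be said), and one must know that the fixed-determinant moduli space is smooth, projective and rationally chain connected in characteristic $p$ --- a very large hammer for this statement, though the Harder--Narasimhan point count is an admissible classical alternative. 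Note also that the elementary substitute for your stable bundles is already implicit in the paper: pushforwards of line bundles along the constant-field-extension cover $Y\to X$ are precisely the $\Ha$-modules the paper uses, and non-splitness is detected not by indecomposability but by the impossibility of representing $\mathbb{L}$ in a proper block of a split section ring. Finally, your closing ``softer route'' misstates that argument: the issue is not whether the split representative $\mathfrak{L}^{B_1}\times\cdots\times\mathfrak{L}^{B_n}$ has the section algebra of a split order (it does, tautologically), but whether one can exhibit a \emph{different}, non-split order in the same spinor genus, which is exactly what the embedding of $\Ha$ accomplishes.
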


\begin{proof}
 One particular example of split order is the maximal
order $\Da_B$ corresponding to the $X$-lattice
$\mathfrak{L}^B\times\oink_X\times\cdots\times\oink_X$.
We claim that \begin{enumerate}\item
Every spinor genera of maximal $X$-orders contains the order
$\Da_B$ for some divisor $B$. \item The maximal orders $\Da_B$ and
$\Da_D$ are in the same spinor genus if and only if $B$ and $D+nC$
are linearly equivalent for some divisor $C$.
\end{enumerate}
The stabilizer of the local maximal $X$-order $\Da_\wp$ is
$K_\wp^*\Da_\wp^*$ and its set of norms is
$K_\wp^{*n}\oink_\wp^*$. We conclude that
$H(\Da)=J_K^n\oink_{\mathbb{A}}^*$. It follows that the class
group $J_K/K^*H(\Da)$ is isomorphic to the divisor group of $X$
modulo $n$-powers. Observe that $\Da_D=u\Da_B u^{-1}$
where $u=\mathrm{diag}(b,1,\dots,1)$, and the idele $b=n(u)\in J_K$
satisfies $\mathrm{div}(b)=D-B$. It follows that $\Da_D$ and
$\Da_B$ are in the same spinor genus if and only if $D-B$ is $0$
modulo $n$-powers in the divisor group of $X$. Furthermore, any spinor genera
can be obtained in this way for an apropiate choice of $b$ in $J_X/K(X)^*H(\Da)$.
In particular, every spinor genus contains a split order.

It follows from the previous argumennt that the class modulo $n$ of
the divisor $B$ depends only on the spinor genera of the maximal
order $\Da_B$. In particular, the degree of $B$ is well defined
for a particular spinor genus as an element of $\enteri/n\enteri$.
We use this in all that follows.

Let $\mathbb{L}$ be the only field extension of the finite field
$\finitum$ of degree $n$. We claim that,
 if $\mathbb{L}$ embeds into $\Da(X)$ for
a split order $\Da$, then $\Da\cong\matrici_n(\oink_X)$.
In fact, let
$\Lambda=\mathfrak{L}^{B_1}\times\cdots\times\mathfrak{L}^{B_n}$
be the lattice corresponding to $\Da$. Then
$\Da=(\mathfrak{L}^{B_i-B_j})_{i,j}$. We define an order in the
group of divisor classes by $D\preceq C$ if $D\leq
C+\mathrm{div}(f)$ for some $f\in K$. Note that
$\mathfrak{L}^{C-D}$ has a non-trivial global section if and only
if $D\preceq C$. We assume that the $B_i$'s have been re-arranged
in a way that $B_n$ is minimal with respect to this order, and
$B_{r+1},\dots,B_n$ are all the divisors that are linearly
equivalent to $B_n$. Then any global section of $\Da$ has the form
$$\bimatrix AB0C$$ where $A$ is an $r$-times-$r$ block. It follows
that $K\mathbb{L}$ has a representation of dimension $r<n$ over
$K$, and therefore $r=0$.

Now, the proposition follows if we prove that for any divisor $B$
 such that $\mathrm{deg} B\equiv0\
(\mathrm{mod}\ n)$, there exists a maximal $X$-order $\Da$ in the
same spinor genus as $\Da_B$ for which there is an embedding
$\mathbb{L}\hookrightarrow\Da(X)$, since we know that $\Da$ cannot be
a split order unless $B$ is principal.

To prove this we let $L=K\mathbb{L}=\mathbb{L}\otimes_{\finitum}K$, and let
$\Ha=\mathbb{L}\otimes_{\finitum}\oink_X$ be the only maximal
order in the $K$-algebra $L$. Note that if $Y$ is the projective
curve over $\mathbb{L}$ defined by the same equations defining $X$
over $\finitum$, and $\phi:Y\rightarrow X$ is the natural morphism
of schemes, then $\Ha$ is the push-forward to $X$ of the structure
sheaf on $Y$. In particular $\Ha(X)=\mathbb{L}$.

Consider the natural embedding
$$\phi:\Ha=\mathbb{L}\otimes_{\finitum}\oink_X\hookrightarrow
\matrici_2(\finitum)\otimes_{\finitum}\oink_X=\Da_0$$ induced by
an arbitrary embedding
$\mathbb{L}\hookrightarrow\matrici_2(\finitum)$. Then
 the order $\Ha'=\phi(\Ha)$ is contained in some
maximal order in the spinor genera of $\Da_B$ if and only if we
can write $\Da_B=u\Da_0u^{-1}$ where the image of the reduced norm
$n(u)$ in the quotient $J_K/K^*H(\Da)$ coincide with the image of a generator.
 Note that if
we identify $L$ with the sub-algebra of $\alge$ spanned by $\Ha'$,
the group of invertible elements $L_{\mathbb{A}}^*$ (all of which
are generators for $\Da|\Ha'$) is isomorphic to the group of
ideles $J_L$ of $L$, and the reduced norm $n:J_L\rightarrow J_K$
is just the field norm $n_{L/K}$. It follows from Theorem 7 in
chapter XIII of \cite{weil}, that $H_L=K^*n_{L/K}(J_L)$ is the
kernel of the Artin map $t\mapsto [t,L/K]$ on ideles. In
particular $H_L$ has index at most n in $J_K$, and we can check
that the divisor of every idele in $H_L$ has degree in $n\enteri$
by computing the degrees of the generators. We conclude that $H_L$
is the group of all ideles whose divisors have degrees in
$n\enteri$, whence the result follows.\end{proof}

\begin{rmk}
Since for every curve $\mathrm{Pic}(X)\cong \enteri\times T$ where
$T\cong\mathrm{Pic}^0(X)$ is a finite group (\cite{weil}, \S IV.4,
Theorem 7), we conclude that
$J_K/K^*H(\Da)\cong(\enteri/n\enteri)\times (T/nT)$. In
particular, the bound in the proposition is $|T/nT|-1$.
\end{rmk}

\begin{rmk}Assume for simplicity that $\mathbb{K}$ has odd
characteristic and $n=2$. Let $B=\mathrm{div}(b)$ be a divisor of
even degree, with $b\in J_K$. An order $\Da$ in the same spinor
genera as $\Da_B$, representing the maximal order of $L$, is given
as follows: Tchebotarev Density Theorem (\cite{rosen}, Thm. 9.13A)
implies the existence of a place $\wp\in X$, such that any idele
$j$, where $j_\wp$ is a uniformizer of $K_\wp$, and
$j_{\mathfrak{q}}=1$ if $\mathfrak{q}\neq\wp$, satisfies
$bj^{-1}\in K^*H(\Da)$. Note that $\wp$ has even degree, whence
the field $\mathbb{L}$ embeds into $K_\wp$. We may assume
$\mathbb{L}=\finitum(u)$, where $u$ is a root of $x^2=\delta$ for
some $\delta \in\finitum$. Then $\mathbb{L}K$ embeds into
$\matrici_2(K)$ by sending $u$ to the matrix \scriptsize $\bimatrix
01\delta0$\normalsize. Let $P$ be the divisor corresponding to
$\wp$. Then we may choose $\Da=b\Da_0 b^{-1}$ where
$b_{\mathfrak{q}}$ is the identity matrix for
$\mathfrak{q}\neq\wp$ and
$$b_\wp=A\bimatrix {j_\wp}001 A^{-1}, \textnormal{ where }A\bimatrix u00{-u}
A^{-1}=\bimatrix 01\delta0.$$ Note that $\det b=j$.
\end{rmk}

\begin{ex}
When $X=\mathbb{P}^1$ is the projective plane, the lower bound
given by this result is $0$, so in principle there could be no
non-split $X$-orders in $\matrici_n(K)$. In fact, the
Birkhoff-Grothendieck Theorem\footnote{ A proof of this result for
arbitrary fields is given in \cite{burban} (Theorem 2.1)} implies
that every such $X$-lattice is a sum of $X$-lattices of rank $1$.
It follows that non-split orders fail to exist and the bound is
sharp in this case.
\end{ex}

\begin{ex}
Consider the plane curve $X$ of genus $1$ with projective equation
$y^2z-x(x^2-z^2)=0$ over a finite field of odd characteristic.
Then $\mathrm{div}(x)=2(P_0-P_\infty)$, while there is no element
in $K=K(X)$ whose divisor is $P_0-P_\infty$, or such element would
generate the field $K$. We conclude that $\mathrm{Pic}^0(X)$ has
an element of order $2$, and therefore its order is even. We
conclude the existence of at least one class of non-split maximal
orders in $\matrici_2(K)$, or equivalently, a non-split vector
bundle defined over $\finitum$.
\end{ex}

\scriptsize

\end{document}